\newtheorem{lemma}{Lemma}[section]
\newtheorem{theorem}[lemma]{Theorem}
\newtheorem{proposition}[lemma]{Proposition}
\theoremstyle{definition}
\newtheorem{remark}[lemma]{Remark}
\newtheorem{example}[lemma]{Example}
\newtheorem{assumption}{Assumption}
\DeclareOldFontCommand{\sc}{\normalfont\scshape}{\@nomath\sc}
\Crefname{assumption}{Assumption}{Assumptions}
\newcommand{\keywords}[1]
{
{\small
\textbf{Key words.} {#1}
\\
}
}
\newcommand{\amssubject}[1]
{
{\small
\textbf{AMS subject classifications.} {#1}
}
}
\renewcommand{\abstract}[1]
{
{\small
\textbf{Abstract.} {#1}
\\
}
}
\title{Convergence rates for ensemble-based
solutions to optimal control of uncertain
dynamical systems\footnote{\textbf{Funding}: This material is based upon work supported by the National Science Foundation under
Award No.\ DMS-2410944.}}
\author{Olena Melnikov\footnotemark[2] \and Johannes Milz\thanks{H.\ Milton Stewart School of Industrial and Systems Engineering, Georgia Institute of Technology, Atlanta, Georgia 30332 (omelnikov6@gatech.edu, johannes.milz@isye.gatech.edu)}}
\date{February 3, 2026}
\begin{document}

\maketitle

\abstract{%
We consider optimal control problems involving nonlinear ordinary differential equations with uncertain inputs. Using the sample average approximation, we obtain optimal control problems with ensembles of deterministic dynamical systems. Leveraging techniques for metric entropy bounds, we derive non-asymptotic Monte Carlo-type convergence rates for the ensemble-based solutions. Our theoretical framework is validated through numerical simulations on a harmonic oscillator problem and a vaccination scheduling problem for epidemic control under model parameter uncertainty.
}

\keywords{stochastic optimization, sample average approximation, Monte Carlo sampling, sample complexity, uncertain dynamical systems,
optimal control}

\amssubject{90C15, 90C30, 49K45, 49N60}

\section{Introduction}
\label{sec:introduction}
We consider the optimal control problem
\begin{align}
\label{eq:optimalcontrolproblem}
\min_{u \in L^2(0,1;\mathbb{R}^m)} \,
\mathbb{E}[F(x^u(1,\xi), \xi)] + \psi(u),
\end{align}
where for each
parameter $\xi \in \Xi$
and control $u(\cdot) \in L^2(0,1;\mathbb{R}^m)$,
$x^u(\cdot, \xi) = x(\cdot, \xi)$ solves the parameterized
affine-control dynamical system
\begin{align}
\label{eq:uncertain-ode}
 \dot{x}(t, \xi)  = f_0(x(t,\xi),\xi) +  f_1(x(t,\xi),\xi) u(t)
 \quad \text{for a.e. }  t \in (0,1),
 \quad x(0,\xi) = x_0(\xi),
\end{align}
where $\Xi$ is a complete separable metric space
equipped with its Borel sigma-algebra,
$F \colon \mathbb{R}^n \times \Xi
\to [0,\infty)$,
$f_0 \colon \mathbb{R}^n \times \Xi \to\mathbb{R}^n$
and
$f_1 \colon \mathbb{R}^n \times \Xi \to\mathbb{R}^{n\times m}$
are Carath\'eodory mappings,
and $x_0 \colon \Xi \to \mathbb{R}^n$ is measurable.
The function $\psi \colon L^2(0,1;\mathbb{R}^m) \to
(-\infty,\infty]$ is proper, lower semicontinuous, and strongly convex with parameter
$\alpha > 0$.
The parameterized initial value
problem in \eqref{eq:uncertain-ode} models
uncertain right-hand sides and initial values.
With some abuse of notation, we use $\xi$ to denote elements of $\Xi$
and a random element taking values in $\Xi$.

We use the sample average approximation (SAA) approach
\cite{Kleywegt2002} to approximate the infinite dimensional
optimization problem \eqref{eq:optimalcontrolproblem}.
Throughout the text,
let $\xi^1, \xi^2, \ldots $ be independent and identically
distributed  $\Xi$-valued random elements defined on a complete
probability space $(\Omega, \mathcal{F}, P)$ such that each $\xi^i$ has the same distribution
as $\xi$. We obtain the SAA problem
\begin{align}
\label{eq:saaproblem}
\min_{u \in L^2(0,1;\mathbb{R}^m)} \,
\frac{1}{N} \sum_{i=1}^N F(x^u(1,\xi^i(\omega)), \xi^i(\omega)) + \psi(u).
\end{align}

The optimization problem \eqref{eq:saaproblem}
is an optimal control problem with
an ensemble of $N$  dynamical systems.
For the numerical solution of  \eqref{eq:saaproblem}, we generate
a realization of $\xi^1(\omega), \xi^2(\omega), \ldots, \xi^N(\omega)$
and solve \eqref{eq:saaproblem} for these fixed elements.
In the remainder of the text, we often omit explicit evaluations
of the random elements $\xi^i$ at $\omega \in \Omega$.

\subsection{Contributions}

In this section, we summarize our four main contributions. Our summary uses
standard notation, which is
formally introduced in \cref{sect:notation}.
We define the parameterized integrand
\begin{align*}
T(u,\xi) \coloneqq F(x^u(1,\xi), \xi).
\end{align*}
Furthermore, we define
$\psi_\alpha(u) \coloneqq \psi(u) -  (\alpha/2)\|u\|_{L^2(0,1;\mathbb{R}^m)}^2$,
\begin{align}
\label{eq:objective-functions}
g(u) \coloneqq \mathbb{E}[T(u,\xi)],
\quad
\text{and} \quad
\widehat{g}_N(u) \coloneqq \frac{1}{N} \sum_{i=1}^N
T(u,\xi^i).
\end{align}

The following items list our four main contributions.

\begin{enumerate}[nosep,wide,label=\textbf{(\alph*)}]
\item
\label{contribution:a}
We establish nonasymptotic mean convergence rates for the
SAA optimal values. Specifically, we show that
for all $N \in \mathbb{N}$,
\begin{align}
\label{eq:objectiveconvergencerate}
\mathbb{E}[|\widehat{v}_N^*-v^*|]
\leq
\frac{\text{Const}}{\sqrt{N}}
\bigg(1 + \frac{1}{\sqrt{\alpha}}\bigg),
\end{align}
where  $v^*$ is the optimal value of  \eqref{eq:optimalcontrolproblem}
and $\widehat v_N^*$ is that of \eqref{eq:saaproblem}.
Moreover, $\text{Const}$ is a constant that does not depend
on the sample size $N$ nor on the strong convexity parameter
$\alpha$. However, it can depend on other problem data,
such as the control's dimension $m$.
\item
\label{contribution:b}
We demonstrate nonasymptotic mean convergence rates
for a criticality measure for \eqref{eq:optimalcontrolproblem}
evaluated at SAA critical points:
for each critical point $u_N^* \in \mathrm{dom}(\psi)$
of \eqref{eq:saaproblem},
that is, $\widehat{\chi}_N(u_N^*)  = 0$, we show that
for all $N \in \mathbb{N}$,
\begin{align}
\label{eq:convergencerate}
\mathbb{E}[\chi(u_N^*)] \leq \frac{\text{Const}}{\sqrt{N}}
\bigg(1 + \frac{1}{\sqrt{\alpha}}\bigg),
\end{align}
where $\mathrm{Const}$
is as in \eqref{eq:objectiveconvergencerate} and
the criticality measures $\chi$ and $\widehat{\chi}_N$
 are defined by
\begin{align}
\label{eq:truecriticalitymeausre}
\chi(u) \coloneqq
\|u-
\mathrm{prox}_{\psi_\alpha}(u-\nabla g(u) -\alpha  u)\|_{L^2(0,1;\mathbb{R}^m)}
\end{align}
and
\begin{align*}
\widehat{\chi}_N(u) \coloneqq
\|u-
\mathrm{prox}_{\psi_\alpha}(u-\nabla
\widehat{g}_N(u)-\alpha  u)
\|_{L^2(0,1;\mathbb{R}^m)}.
\end{align*}

\item
\label{contribution:c}
Our contributions in \ref{contribution:a} and \ref{contribution:b} are derived using uniform
expectation bounds for Hilbert space-valued
``sub-Gaussian-type'' Carath\'eodory mappings.
Since this result may be of independent interest, we summarize it here.
Let $H$ be a real, separable Hilbert space,
let $C \subset H$ be a nonempty, closed set
with diameter $D > 0$,
and let $G \colon C \times \Xi \to H$ be a Carath\'eodory mapping
such that $\xi \mapsto G(x,\xi)$ is integrable and $\mathbb{E}[G(x,\xi)] = 0$
for each $x \in C$.
Furthermore, suppose that there exists a constant $M > 0$ such that
the sub-Gaussian-type estimate
\begin{align*}
\mathbb{E}[\cosh(\lambda \|G(x,\xi) - G(y,\xi)\|_H)] \leq
\exp(M^2 \lambda^2 \|x-y\|_H^2/2)
\; \text{for all} \;x, y \in C,
\; \lambda \geq 0
\end{align*}
holds.
Under these technical conditions, we establish the uniform
 expectation bound
\begin{align}
\label{eq:metricentropybound}
\mathbb{E}[\sup_{x \in C}\,
\|\widehat{G}_N(x)\|_H]
\leq \mathbb{E}[\|\widehat G_N(x_0)\|_H]
+  \frac{4 \sqrt{3} M }{\sqrt{N}} \int_{0}^{D/2}
\sqrt{\ln(2\mathcal{N}(C, \varepsilon))}
\, \mathrm{d}\varepsilon,
\end{align}
where
$\widehat{G}_N(x)
\coloneqq (1/N)\sum_{i=1}^N
G(x,\xi^i)$, and
$\mathcal{N}(C,\varepsilon)$ is the
$\varepsilon$-covering number of $C$
(see \cref{sect:notation} for a definition).
This uniform expectation bound is valid
for each fixed $x_0 \in C$.

\item Contributions \ref{contribution:a} through \ref{contribution:c} are technical in nature. Our final contribution involves formulating vaccination scheduling for epidemic control with random inputs as a risk-neutral optimal control problem. We validate our theoretical findings for
this vaccination scheduling problem through numerical simulations.
\end{enumerate}

\subsection{Related work}

The SAA approach has been used for various optimal control problems with uncertain dynamics, notably in optimizing batch reactors
with model parameter uncertainty \cite{Ruppen1995,Srinivasan2003}. Recent texts have established some of its theoretical properties, including asymptotic consistency and sample complexity. The paper \cite{Phelps2016} analyzes the almost sure epiconvergence of the SAA objective function and consistency statements for SAA critical points.
Further asymptotic properties are demonstrated in \cite{Scagliotti2023}.
Moreover, the work \cite{Milz2021} establishes the optimal sample complexity of
strongly convex stochastic optimization in Hilbert space.

Additionally, theoretical analyses
of the SAA framework have been extended to
other  infinite dimensional stochastic
nonconvex optimization problems with differential
equation constraints. This includes
the derivation of sample size estimates
for semilinear PDE-constrained optimization
under uncertainty \cite{Milz2022b} and
the consistency analysis of
sample-based approximations to risk-averse stochastic optimization
problem with infinite dimensional decision spaces
\cite{Milz2022a}.
For risk-neutral linear elliptic PDE-constrained optimization,
mean convergence rates for SAA optimal values and
solutions, as well as a central limit theorem, are established
in \cite{Roemisch2021}.
Parts of our theoretical analysis is based on the construction
of a deterministic compact set containing the SAA critical points.
This construction is inspired by the analysis
developed in \cite{Milz2022b}, but we use
different statistical tools to derive convergence rates.

The right-hand side of \eqref{eq:metricentropybound} resembles the classical metric entropy bound for the suprema of sub-Gaussian processes (see, for example, \cite[Thm.\ 3.1 on p.\ 95]{Buldygin2000}).
Metric entropy bounds are one of the tools
for analyzing the SAA approach; see, for example, \cite{Lew2024}.

\subsection{Manuscript overview}

We present notation and collect basic terminology in \cref{sect:notation}.
In \cref{sec:assumptions}, we formulate technical assumptions on the optimal control problem and the initial value problem.
These assumptions enable us to establish the gradient and control regularity in
\cref{sec:gradientregularity,sec:controlregularity}. Our main
results on the convergence rates for the SAA optimal values and critical points
are presented in \cref{sec:ratesvalues,sec:ratescritialponts}.
These theoretical results are empirically validated
in \cref{sec:numericalillustrations}
through two examples. Our appendices collect and establish results necessary for establishing the SAA convergence rates. While \cref{sec:optimality-conditions}
states essentially known first-order optimality conditions, \cref{sec:coveringnumbers} derives bounds
on
the covering numbers of square-integrable
mappings with square-integrable derivatives.
Finally, \cref{section:uniformexpectation} concludes by establishing the uniform exponential tail bounds.

\section{Notation}
\label{sect:notation}

We collect basic notation and terminology.
If not stated otherwise, equations involving random elements hold with
probability one.
Let $H$ be a Hilbert space with inner product $(\cdot, \cdot)_H$.
We use $\|\cdot\|_H$ to denote the norm induced by its inner
product $(\cdot, \cdot)_H$.
For a Banach space $X$, $\|\cdot\|_X$ denotes its norm.
For Banach spaces $X_1$, $\ldots$, $X_m$, we
equip the Cartesian product  $X_1 \times \cdots \times X_m$
with the norm
$\|(x_1, \ldots, x_m)\|_{X_1 \times \cdots \times X_m} \coloneqq
 (\sum_{i=1}^m\|x_i\|_{X_i}^2)^{1/2}
$
if $X_1$, $\ldots$, $X_m$ are Hilbert spaces,
and
$\|(x_1, \ldots, x_m)\|_{X_1 \times \cdots \times X_m} \coloneqq
\sum_{i=1}^m\|x_i\|_{X_i}
$ otherwise.
We denote by
$\mathbb{B}_{X}(0;r)$
 the open unit norm
ball with radius $r$ and by $\overline{\mathbb{B}}_{X}(0;r)$
its closure.
If not stated otherwise, we equip $\mathbb{R}^m$ with the
Euclidean norm $\|\cdot\|_2$.
We denote the spectral norm of a matrix $A$
and more generally
the operator norm of a linear operator $A$ between Euclidean spaces by $\|A\|_2$.
We call an extended real-valued function $G$
defined on a Hilbert space $H$ strongly convex
with parameter $\alpha > 0$
if $G(\cdot) - (\alpha/2) \|\cdot\|_{H}^2$
is convex.
The domain of a function $\phi$ is denoted by
$\mathrm{dom}(\phi)$.
For a proper, lower semicontinuous, convex function $\phi$ on a Hilbert space $H$,
its proximity operator is defined by
$
\mathrm{prox}_{\phi}(x) \coloneqq
 \mathrm{argmin}_{y \in H}\, (1/2)\|x-y\|_H^2 + \phi(y)
$.

The spaces $L^p(0,1;\mathbb{R}^m)$
$(1 \leq p \leq \infty)$ denote the ``usual'' Lebesgue spaces
equipped with their canonical norms.
For $1 \leq p \leq \infty$,
we define the Sobolev space
\begin{align*}
W_1^p(0,1;\mathbb{R}^m)
\coloneqq \{\, v \in L^p(0,1;\mathbb{R}^m) \colon \,
 \dot{v} \in L^p(0,1;\mathbb{R}^m) \, \}.
\end{align*}
Here, $\dot{v}$ denotes the
distributional (time) derivative of $v$. While we generally use the ``dot notation,'' we occasionally write $D_t v$ in contexts where $\dot{v}$ is typographically inconvenient.
We equip $W_1^p(0,1;\mathbb{R}^m)$
with
$$
\|v\|_{W_1^p(0,1;\mathbb{R}^m)}
\coloneqq
\big(\|v\|_{L^p(0,1;\mathbb{R}^m)}^p
+
\|\dot{v}\|_{L^p(0,1;\mathbb{R}^m)}^p
\big)^{1/p}
$$
if  $1 \leq p < \infty$ and
$$
\|v\|_{W_1^\infty(0,1;\mathbb{R}^m)}
\coloneqq
\|v\|_{L^\infty(0,1;\mathbb{R}^m)}
+
\|\dot{v}\|_{L^\infty(0,1;\mathbb{R}^m)}
$$
if $p = \infty$.

Let $H_2 \subset H_1$ be convex sets in a real Hilbert space $H$.
Adapting Definition~5.1.26 in \cite{Polak1997},
we say  that $G \colon H_1 \to \mathbb{R}$ is Gateaux differentiable
relative to $H_1$ at a point $v\in H_2$ if there exists
a linear bounded operator $DG(v) $ on $H$
such that for each $h \in H$ with $v+ \lambda h \in H_1$
for some $\lambda >0$,
$G$ is directionally differentiable at $v$ in the direction
$h$ with directional derivative $DG(v)h$.
We say that $G$ is Gateaux differentiable on $H_2$
relative to $H_1$ if it is Gateaux differentiable
relative to $H_1$
at each $v\in H_2$.
We denote by $\nabla G(v)$ the Riesz representation of
$DG(v)$.
(Different from Definition~5.1.26 in \cite{Polak1997}, we do not
require $G$ be continuous).
We denote the Gateaux derivative of a function
$G$ by $DG$. The partial derivative of $G$ with respect to $v$ is denoted by $D_v G$.

For a metric space $(Y, d_Y)$ and a nonempty, totally bounded subset $Y_0 \subset Y$, we define the $\nu$-covering number
$\mathcal{N}(Y_0, Y,\nu)$
as the minimal number of points in a $\nu$-net
of $Y_0$ in $Y$.
We note that the elements in  $\nu$-nets of $Y_0$ in $Y$ are not required to be contained in $Y_0$.
Since $Y_0$ is totally bounded, $(Y_0,d_Y)$ is a totally bounded metric space. For abbreviation, we define
$\mathcal{N}(Y_0, \nu) \coloneqq \mathcal{N}(Y_0, Y_0; \nu)$.

\section{Assumptions on optimal control problem}
\label{sec:assumptions}

We state a number of assumptions on the
optimal control problem \eqref{eq:optimalcontrolproblem} and
the initial value problem \eqref{eq:uncertain-ode}, in addition to those stated in \cref{sec:introduction}.
The assumptions ensure, for example, well-posedness of
the optimal control problems,
differentiability of sample average and expectation functions,
and existence and uniqueness of solutions to the parameterized
initial value problem. Moreover, these conditions allow us
to demonstrate that the integrand in \eqref{eq:optimalcontrolproblem}
has square integrable time derivatives for each parameter $\xi$.
These conditions are mainly inspired by those used in
\cite{Phelps2016,Polak1997}. After formulating the first
assumption, we comment on three key differences of our
problem formulation and that of the constrained risk-neutral control problem in \cite{Phelps2016}.

Our analysis is based on the following conditions on
the control regularizer.

\begin{assumption}
\label{assumption:feasibleset-regularizer}
There exists a constant $r_{\psi} \in (0, \infty)$ such that
$\|u\|_{L^\infty(0,1;\mathbb{R}^m)} \leq r_{\psi} $
for all $u \in \mathrm{dom}(\psi)$.
\end{assumption}

\Cref{assumption:feasibleset-regularizer} ensures that the
domain of $\psi$ is a bounded subset in $L^\infty(0,1;\mathbb{R}^m)$.
\Cref{assumption:feasibleset-regularizer} is, for example, satisfied
for the sparsity-promoting control regularizer
$$\psi(u) = (\alpha/2) \|u\|_{L^2(0,1;\mathbb{R}^m)}^2
+ \beta  \|u\|_{L^1(0,1;\mathbb{R}^m)}
+  \iota_{ \{u \in L^2(0,1;\mathbb{R}^m) \colon
\|u\|_{L^\infty(0,1;\mathbb{R}^m)} \leq 1
\}}(u),$$
where $\alpha > 0$, and $\beta \geq 0$.
Moreover, $\iota_Y \colon L^2(0,1;\mathbb{R}^m) \to \{0,\infty\}$
is the indicator function of
the set $Y \subset L^2(0,1;\mathbb{R}^m)$,
that is, $\iota_Y(y) = 0$ if $y \in Y$ and $\iota_Y(y) = \infty$
otherwise.

Our formulation of the control problem \eqref{eq:optimalcontrolproblem} differs from that of the constrained risk-neutral control problem
in \cite{Phelps2016} in three key aspects:
(i) we consider affine-linear controls in \eqref{eq:uncertain-ode} instead of potentially
nonaffine control dependence, (ii) we impose the strong convexity of $\psi$, and (iii) we omit the inclusion of additional $n$-dimensional decision variables that are added to the initial value $x_0(\xi)$ in \eqref{eq:uncertain-ode}. The first two aspects enable us to construct a deterministic compact set containing the SAA solutions. By omitting the additional, finite dimensional decision variables, we can present a concise sample complexity analysis focused on the infinite dimensional controls.  Nonetheless, the inclusion of finite dimensional spaces remains a feasible direction for future work. Despite these modifications, we are still able to leverage the theoretical results from \cite{Phelps2016}, including differentiability statements and Lipschitz constant estimates of the integrand.

The next assumption is adapted from Assumption~1 in \cite{Phelps2016}.
It requires that the parameterized initial value problem has a solution
and that the state be contained in a deterministic compact set.

\begin{assumption}
\label{assumption:essentiallyboundedtrajectories}
There exists a convex, compact set $X_0 \subset \mathbb{R}^n$
with $0 \in X_0$ such that for each
$\xi \in \Xi$
and
$u \in \mathbb{B}_{L^\infty(0,1;\mathbb{R}^m)}(0;2r_\psi)$,
it holds that
$x^u(t,\xi) \in X_0$ for a.e.\ $t \in [0,1]$.
\end{assumption}

\Cref {assumption:essentiallyboundedtrajectories}
implies that the parameterized initial value is bounded.
More precisely, \Cref {assumption:essentiallyboundedtrajectories}  ensures
the existence of a constant $V_0 \in (0,\infty)$
such that $\|x_0(\xi)\|_2 \leq V_0$ for all $\xi \in \Xi$.

The next assumption is motivated by Assumption~5.6.2 in
\cite{Polak1997} and Assumptions~2, 3, and 5 in \cite{Phelps2016}.
We define the dynamical system's right-hand
side
$f \colon \mathbb{R}^n \times \mathbb{R}^m \times \Xi \to \mathbb{R}^n$
by
\begin{align}
\label{eq:rhs}
f(x,u,\xi) \coloneqq f_0(x,\xi) +  f_1(x,\xi) u.
\end{align}

\begin{assumption}
\label{assumption:dynamicalsystem}
Let $r_{\psi}$ be as in \Cref{assumption:feasibleset-regularizer},
and let $X_0$ be as in \Cref{assumption:essentiallyboundedtrajectories}.
\begin{enumerate}[nosep]
\item For each $\xi \in \Xi$,
$f_0(\cdot,\xi)$ and $f_1(\cdot,\xi)$ are continuously differentiable, and
for each $x \in X_0$,
$D_x f_0(x,\cdot)$, and $D_x f_1(x,\cdot)$
are measurable.
\item
There exists a constant $L_f\in (0,\infty)$ such that for each $\xi \in \Xi$,
$f_0(\cdot, \xi)$, $f_1(\cdot, \xi)$, $D_x f_0(\cdot,\xi)$, and $D_x f_1(\cdot,\xi)$
are Lipschitz continuous on an open neighborhood of  $X_0$
with Lipschitz constant $L_f$. Moreover,
$\|f_0(0,\xi)\|_2 \leq L_f$
for all $\xi \in \Xi$,
and
$\|f_1(x,\xi)\|_2 \leq L_f$
and
$\|D_x f_1(x,\xi)\|_2 \leq L_f$
for all $(x,\xi) \in X_0 \times \Xi$.

\item
For each $\xi \in \Xi$,
$F(\cdot,\xi)$ is continuously differentiable,
and for each $x \in X_0$,
$F(x, \cdot)$ and $D_xF(x,\cdot)$ are measurable.
\item
There exists a constant $L_F^\prime \in (0,\infty)$
such that for each $\xi \in \Xi$,
$F(\cdot, \xi)$ and $D_x F(\cdot,\xi)$ are Lipschitz continuous on
an open neighborhood of $X_0$
with Lipschitz constant $L_F^\prime$.
\end{enumerate}
\end{assumption}

\Cref{assumption:feasibleset-regularizer,assumption:dynamicalsystem,assumption:essentiallyboundedtrajectories} ensure that there exists a neighborhood $\tilde X_0$ of
$X_0 $, such that for each $\xi \in \Xi$,
$f(\cdot, \cdot, \xi)$ and $D_x f(\cdot, \cdot, \xi)$ are Lipschitz continuous
on
$\tilde X_0 \times \{u \in \mathbb{R}^m \colon \|u\|_2 \leq  2 r_{\psi}\}$
with Lipschitz constant
\begin{align*}
L_{f}^\prime \coloneqq
L_f \sqrt{(1 + 2r_\psi)^2 + 1}.
\end{align*}
Hence for all $x \in X_0$, $\xi \in \Xi$, and $u \in \mathbb{R}^m$
with $\|u\|_2 \leq 2r_\psi$,
\begin{align}
\label{eq:rhs-bound}
\begin{aligned}
\|f(x,u,\xi)\|_2 & \leq
L_f^\prime(\|x\|_2 + 2 r_{\psi}+1),
\quad \text{and} \quad
&\|D_x f(x,u,\xi)\|_2  &\leq L_f^\prime.
\end{aligned}
\end{align}

We conclude this section by discussing
\Cref{assumption:essentiallyboundedtrajectories}.
\begin{remark}
For globally Lipschitz continuous right-hand sides $f(\cdot, \cdot, \xi)$
with Lipschitz constants independent of $\xi$,
standard results, such as Proposition~5.6.5 in \cite{Polak1997}, can be
used to construct a set $X_0$ satisfying
\Cref{assumption:essentiallyboundedtrajectories}.
Following the approach in \cite{Phelps2016}, we assume the existence of such a set  $X_0$.
 This enables us to work with right-hand sides that are Lipschitz continuous only on the compact set
$X_0$,  allowing a broader class of problems to be considered.
\end{remark}

\section{Gradient regularity via adjoint and state regularity}
\label{sec:gradientregularity}

The section's main purpose is to show that the
objective function's gradient is contained in a compact
subset of the feasible set. The analysis performed is based on the results provided in \cite{Phelps2016,Polak1997}.

We recall that the control problem's integrand
$T \colon \mathbb{B}_{L^\infty(0,1;\mathbb{R}^m)}(0;2r_\psi) \times \Xi \to [0,\infty)$ is defined by
\begin{align*}
T(u,\xi) \coloneqq F(x^u(1,\xi), \xi).
\end{align*}

If \Cref{assumption:feasibleset-regularizer,assumption:dynamicalsystem,assumption:essentiallyboundedtrajectories}
hold and $\xi \in \Xi$, then
Corollary~5.6.9 in \cite{Polak1997} ensures that
$T(\cdot, \xi)$ is Gateaux differentiable
on $\mathrm{dom}(\psi)$ relative
to $\mathbb{B}_{L^\infty(0,1;\mathbb{R}^m)}(0;2r_\psi) $
with
\begin{align}
\label{eq:gradient-T}
[\nabla_u T(u,\xi)](t) = f_1(x^u(t,\xi),\xi)^T p^u(t, \xi)
\quad \text{for a.e. }  t \in (0,1),
\end{align}
where $p^u(\cdot,\xi) = p(\cdot, \xi)$
solves for a.e.\ $t \in (0,1)$
the parameterized adjoint equation
\begin{align}
\label{eq:adjoint-equation}
\dot{p}(t, \xi)  =
- D_x f(x^u(t,\xi), u(t),\xi)^T p(t,\xi),
\quad p(1,\xi) = \nabla_x F(x^u(1,\xi),\xi).
\end{align}

Building on the initial discussion, the section's main purpose is to show that
$\nabla_u T(u,\xi)$ is contained in a fixed deterministic ball in
$W_1^2(0,1; \mathbb{R}^m)$
for each $(u,\xi) \in \mathrm{dom}(\psi) \times \Xi$.
This result is formulated in the following theorem, with
an explicit expression for the ball's radius.

\begin{theorem}
\label{thm:gradientregularity}
If \Cref{assumption:feasibleset-regularizer,assumption:dynamicalsystem,assumption:essentiallyboundedtrajectories}
hold,
then for each $(u,\xi) \in \mathrm{dom}(\psi) \times \Xi$,
\begin{align*}
\nabla_u T(u,\xi) \in
\overline{\mathbb{B}}_{W_1^\infty(0,1;\mathbb{R}^m)}(0;R)
\subset
\overline{\mathbb{B}}_{W_1^2(0,1;\mathbb{R}^m)}(0;R),
\end{align*}
where
\begin{align}
\label{eq:stabilityestimateradius}
R \coloneqq
R_0
+ L_f^{\prime}
\big[ (1 + 2r_\psi + V_0) \mathrm{e}^{L_f^\prime} + 2r_\psi + 2 \big]
R_0
\quad \text{with} \quad
R_0 &\coloneqq
L_f^\prime (1+ L_F^\prime) \mathrm{e}^{L_f^\prime}.
\end{align}
\end{theorem}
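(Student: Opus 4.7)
The plan is to use the closed-form representation~\eqref{eq:gradient-T}, $[\nabla_u T(u,\xi)](t) = f_1(x^u(t,\xi),\xi)^T p^u(t,\xi)$, and bound both the pointwise $\|\cdot\|_2$-norm of this integrand and of its time derivative, thereby obtaining a $W_1^\infty(0,1;\mathbb{R}^m)$-bound on $\nabla_u T(u,\xi)$. The stated $W_1^2$-inclusion will then be automatic: since the interval has length one, $\|v\|_{W_1^2(0,1;\mathbb{R}^m)} \leq \|v\|_{W_1^\infty(0,1;\mathbb{R}^m)}$ for every $v$, so a closed ball of radius $R$ in $W_1^\infty$ sits inside the one of radius $R$ in $W_1^2$. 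The two pointwise bounds will come from Gr\"onwall's inequality applied to the state equation~\eqref{eq:uncertain-ode} run forward and to the adjoint equation~\eqref{eq:adjoint-equation} run backward.

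I would first bound the state: any $u \in \mathrm{dom}(\psi)$ satisfies $\|u\|_{L^\infty(0,1;\mathbb{R}^m)} \leq r_\psi$, so \Cref{assumption:essentiallyboundedtrajectories} places $x^u(t,\xi)$ in $X_0$ for a.e.\ $t$, and the first inequality in~\eqref{eq:rhs-bound} yields $\frac{d}{dt}(\|x^u(t,\xi)\|_2 + 1) \leq 2 L_f^\prime r_\psi (\|x^u(t,\xi)\|_2 + 1)$; together with $\|x_0(\xi)\|_2 \leq V_0$, Gr\"onwall gives $\|x^u(t,\xi)\|_2 + 1 \leq (V_0+1)\mathrm{e}^{2L_f^\prime r_\psi}$ and, by re-entering~\eqref{eq:rhs-bound}, a uniform bound on $\|\dot x^u(t,\xi)\|_2$ of the same order. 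Next I would bound the adjoint: the terminal value in~\eqref{eq:adjoint-equation} satisfies $\|\nabla_x F(x^u(1,\xi),\xi)\|_2 \leq L_F^\prime$ by the Lipschitz continuity of $F(\cdot,\xi)$, and $\|D_x f(\cdot,\cdot,\xi)\|_2 \leq L_f^\prime$ from~\eqref{eq:rhs-bound} lets a backward Gr\"onwall argument deliver $\|p^u(t,\xi)\|_2 \leq L_F^\prime \mathrm{e}^{L_f^\prime}$ uniformly on $[0,1]$. Combined with $\|f_1(x^u(t,\xi),\xi)\|_2 \leq L_f^\prime$, this already gives the pointwise estimate $\|[\nabla_u T(u,\xi)](t)\|_2 \leq L_f^\prime L_F^\prime \mathrm{e}^{L_f^\prime} \leq R_0$.

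For the time derivative, the product and chain rules give $\frac{d}{dt}[\nabla_u T(u,\xi)](t) = \bigl(\frac{d}{dt} f_1(x^u(t,\xi),\xi)\bigr)^{\!T} p^u(t,\xi) + f_1(x^u(t,\xi),\xi)^T \dot p^u(t,\xi)$, where the matrix norm of the first factor is bounded by $L_f^\prime \|\dot x^u(t,\xi)\|_2$ via $\|D_x f_1\|_2 \leq L_f^\prime$, and the second factor is controlled via the adjoint equation by $L_f^\prime \cdot L_f^\prime \|p^u(t,\xi)\|_2$. Substituting the previously derived bounds on $\|\dot x^u\|_2$ and $\|p^u\|_2$ produces an $L^\infty$-bound on the derivative of the order $(L_f^\prime)^2(1+V_0)\mathrm{e}^{2L_f^\prime r_\psi} R_0$ up to an absolute constant, and this is absorbed into the second summand $6(L_f^\prime)^2(1+V_0)\mathrm{e}^{2L_f^\prime r_\psi} R_0$ in~\eqref{eq:stabilityestimateradius}. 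The main obstacle is not any single Gr\"onwall step, each of which is classical, but the bookkeeping required so that the constants $L_f^\prime$, $L_F^\prime$, $r_\psi$, $V_0$ combine into \emph{exactly} the radius in~\eqref{eq:stabilityestimateradius}; in particular one must check that the $r_\psi$-dependence enters only through the state estimate and does not contaminate the $r_\psi$-free adjoint bound.
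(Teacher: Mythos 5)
Your proposal follows essentially the same route as the paper: it uses the representation \eqref{eq:gradient-T}, the Gr\"onwall-type state and adjoint stability estimates (which the paper delegates to Proposition~5.6.5, Exercise~5.6.6, and Corollary~5.6.9 of \cite{Polak1997} and records as \Cref{lem:stateadjointestimates}), the product-rule identity \eqref{eq:TimeDerivativeGradientT} for the time derivative, and the observation that on a unit interval the $W_1^\infty$-ball of radius $R$ sits inside the $W_1^2$-ball of radius $R$. The constant bookkeeping you flag as the main obstacle is handled in the paper exactly as you sketch it, so the proposal is correct and matches the paper's argument.
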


We prepare our proof of \Cref{thm:gradientregularity}.
The following lemma collects basic properties of $T$ mainly taken from
\cite{Phelps2016}.
We define
\begin{align}
\label{eq:LipschitzConstantT}
L_T^\prime \coloneqq \sqrt{2} L_F^\prime (L_f^\prime +1) \mathrm{e}^{L_f^\prime +1}.
\end{align}

\begin{lemma}
\label{lem:ReducedObjectiveLipschitzGateaux}
If
\Cref{assumption:feasibleset-regularizer,assumption:dynamicalsystem,assumption:essentiallyboundedtrajectories}
hold,
then the following statements are valid.
\begin{enumerate}[nosep]

\item For each $\xi \in \Xi$,
$T(\cdot, \xi)$ is Lipschitz continuous
with Lipschitz constant
$L_T^\prime$, and  Gateaux differentiable on $\mathrm{dom}(\psi)$ relative to
$\mathbb{B}_{L^\infty(0,1;\mathbb{R}^m)}(0;2r_\psi)$.
\item
There exists a constant $L_{\nabla T}^\prime \in (0,\infty)$
such that
$\nabla_u T(\cdot, \xi)$ is Lipschitz continuous
on $\mathrm{dom}(\psi)$
with Lipschitz constant $L_{\nabla T}^\prime$
for each  $\xi \in \Xi$.
\item The mappings $T $ and
$\nabla_u T$
are Carath\'eodory mappings
on $ \mathrm{dom}(\psi) \times \Xi$.
\item The  function
$u\mapsto \mathbb{E}[T(u,\xi)]$
is Gateaux differentiable on $\mathrm{dom}(\psi)$ relative to
$\mathbb{B}_{L^\infty(0,1;\mathbb{R}^m)}(0;2r_\psi)$.
\end{enumerate}
\end{lemma}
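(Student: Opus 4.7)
The plan is to chain standard ODE stability estimates with the stated regularity of $F$, $f_0$, $f_1$, and to combine them with measurability of the parameterized ODE data. All four parts rest on Grönwall-type estimates for the state $x^u$ and the adjoint $p^u$, together with the a priori state bound $x^u(t,\xi) \in X_0$ provided by \Cref{assumption:essentiallyboundedtrajectories}.

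For part (a), I would first derive a state stability estimate. For $u_1, u_2 \in \mathbb{B}_{L^\infty(0,1;\mathbb{R}^m)}(0;2r_\psi)$ and $x_i \coloneqq x^{u_i}(\cdot,\xi)$, adding and subtracting $f(x_1,u_2,\xi)$ inside the integrated ODE and invoking the Lipschitz bounds from \eqref{eq:rhs-bound} gives
\begin{align*}
\|x_1(t) - x_2(t)\|_2 \leq L_f^\prime \int_0^t \|x_1(s) - x_2(s)\|_2 \, \mathrm{d}s + L_f^\prime \int_0^t \|u_1(s) - u_2(s)\|_2 \, \mathrm{d}s.
\end{align*}
Grönwall's inequality then yields $\|x_1 - x_2\|_{L^\infty(0,1;\mathbb{R}^n)} \leq L_f^\prime \mathrm{e}^{L_f^\prime} \|u_1 - u_2\|_{L^1(0,1;\mathbb{R}^m)}$, and composing with the $L_F^\prime$-Lipschitz map $F(\cdot,\xi)$ on $X_0$ together with Cauchy--Schwarz produces the claimed Lipschitz constant $L_T^\prime = \sqrt{2}\,L_F^\prime L_f^\prime \mathrm{e}^{L_f^\prime}$. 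The Gateaux differentiability statement can be quoted directly from Corollary~5.6.9 in \cite{Polak1997}.

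For part (b), I would use the adjoint formula \eqref{eq:gradient-T} together with an analogous stability estimate for $p^u$. Writing $p_i \coloneqq p^{u_i}(\cdot,\xi)$, the difference $p_1-p_2$ solves a backward linear ODE whose coefficient $D_x f(x^{u_i},u_i,\xi)^T$ varies Lipschitz-continuously in $(x,u)$ by \Cref{assumption:dynamicalsystem}, and whose terminal datum is controlled by the Lipschitz continuity of $\nabla_x F(\cdot,\xi)$ on $X_0$. Applying Grönwall backward from $t=1$, and inserting the state stability from (a), gives a bound $\|p_1-p_2\|_{L^\infty(0,1;\mathbb{R}^n)} \leq C\|u_1-u_2\|_{L^2(0,1;\mathbb{R}^m)}$ with $C$ independent of $\xi$. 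Substituting into \eqref{eq:gradient-T} and combining with $\|f_1(x,\xi)\|_2 \leq L_f^\prime$, $\|D_x f_1(x,\xi)\|_2\leq L_f^\prime$, and the a priori bound on $p^u$ (another Grönwall application on \eqref{eq:adjoint-equation}) yields the uniform Lipschitz constant $L_{\nabla T}^\prime$. This bookkeeping of constants, and in particular obtaining $\xi$-uniform bounds on both the state and the adjoint simultaneously, is the main technical obstacle in the lemma; everything else is routine once this estimate is in hand.

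For part (c), Carathéodory means measurability in $\xi$ and continuity in $u$; the latter is part (a) and part (b), while measurability of $x^u(\cdot,\xi)$ in $\xi$ follows from the Carathéodory property of $f_0$ and $f_1$, measurability of $x_0$, and joint measurability of the Picard iterates that converge uniformly on $[0,1]$. The same argument applied to \eqref{eq:adjoint-equation} yields measurability of $p^u(\cdot,\xi)$, and then measurability of $T$ and of $\nabla_u T$ follows by composition with the Carathéodory mappings $F$, $\nabla_x F$, and $f_1$. For part (d), since $\nabla_u T(u,\xi)$ is uniformly bounded in $(u,\xi) \in \mathrm{dom}(\psi) \times \Xi$ (by the same a priori state and adjoint bounds used in \Cref{thm:gradientregularity}), dominated convergence justifies interchanging the directional derivative and the expectation along every admissible direction, giving Gateaux differentiability of $u \mapsto \mathbb{E}[T(u,\xi)]$ on $\mathrm{dom}(\psi)$ relative to $\mathbb{B}_{L^\infty(0,1;\mathbb{R}^m)}(0;2r_\psi)$.
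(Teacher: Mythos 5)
Your proposal is correct, but it takes a more self-contained route than the paper. The paper's own proof is almost entirely by citation: part (a) is delegated to Lemma~5.6.3 and Corollary~5.6.9 of \cite{Polak1997} (and Lemma~3.5 of \cite{Phelps2016}), part (b) to Lemma~5.2 of \cite{Phelps2016}, part (c) to Lemmas~2.3 and~3.5 of \cite{Phelps2016} plus composition rules, and part (d) to Lemma~4.3 of \cite{Phelps2016}. You instead reconstruct the content of those cited results directly: the Gr\"onwall state-stability estimate and composition with the Lipschitz map $F(\cdot,\xi)$ for (a), the backward linear ODE for $p_1-p_2$ with Lipschitz coefficients and terminal data for (b), joint measurability of Picard iterates for (c), and dominated convergence with a $\xi$-uniform dominating function for (d). The mathematics underlying the two proofs is the same --- the cited lemmas in \cite{Polak1997,Phelps2016} are themselves proved by exactly these Gr\"onwall and dominated-convergence arguments --- so what your version buys is transparency and explicit, $\xi$-uniform constants, at the cost of length; the paper's version buys brevity by outsourcing the estimates. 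Two minor remarks: your derivation in (a) actually yields the Lipschitz constant $L_F^\prime L_f^\prime \mathrm{e}^{L_f^\prime}$, which is smaller than the paper's $L_T^\prime = \sqrt{2}\,L_F^\prime L_f^\prime \mathrm{e}^{L_f^\prime}$, so the stated constant is still valid; and in (d) it is cleaner to dominate the difference quotients by the constant $L_T^\prime\|h\|_{L^2(0,1;\mathbb{R}^m)}$ from part (a) rather than by the gradient bound, since the latter is only established on $\mathrm{dom}(\psi)$ while the mean-value segment may leave that set.
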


\begin{proof}
\begin{enumerate}[wide,nosep]
\item
The Lipschitz statement
follows from the proof of Lemma~5.6.7
in
\cite{Polak1997}, since $L_{f}^\prime+1\geq 1$.
See also Lemma~3.5 in \cite{Phelps2016}.
Furthermore,
Corollary~5.6.9 in \cite{Polak1997} ensures the Gateaux differentiability statement.

\item
This statement follows from an application
of Lemma 5.2 in \cite{Phelps2016}.

\item
According to Lemma 3.5 in \cite{Phelps2016},
$T$ is a Carath\'eodory mapping.
Using Lemma 2.3 in \cite{Phelps2016}, composition rules,
and \eqref{eq:gradient-T}, we can show that
$\nabla_u T(u,\cdot)$  is measurable on $\Xi$.
See also Lemma 5.2 in \cite{Phelps2016}.

\item
An application of Lemma 4.3  in \cite{Phelps2016} yields
the assertion.
\end{enumerate}
\end{proof}

Using
chain and product rules (cf.\ \cite[p.\ 340]{Leoni2017} and \cite[Cor.\ 3.2]{Ambrosio1990}),
and \eqref{eq:gradient-T},
we find that for a.e.\ $t \in (0,1)$,
\begin{align}
\label{eq:TimeDerivativeGradientT}
D_t[\nabla_u T(u,\xi)](t)
=
D_t[ f_1(x^u(t,\xi),\xi)^T] p^u(t,\xi)
+ f_1(x^u(t,\xi),\xi)^T \dot{p}^u(t,\xi).
\end{align}
This identity allows us to establish stability estimates
on the gradient's time derivative.

\begin{lemma}
\label{lem:stateadjointestimates}
If
\Cref{assumption:feasibleset-regularizer,assumption:dynamicalsystem,assumption:essentiallyboundedtrajectories}
hold,
then the following statements hold true.
\begin{enumerate}[nosep]
\item For each $(u,\xi)\in \mathbb{B}_{L^\infty(0,1;\mathbb{R}^m)}(0;2r_\psi) \times \Xi$,
the dynamical system
\eqref{eq:uncertain-ode}
has a unique solution $x^{u}(\cdot,\xi) \in W_1^\infty(0,1;\mathbb{R}^n)$, and
for a.e.\ $t \in [0,1]$,
\begin{align}
\label{eq:statestabilityestimate}
\begin{aligned}
\|x^u(t, \xi)\|_{2}
&\leq
(1+2r_{\psi} + V_0) \mathrm{e}^{L_f^\prime },
\\
\|\dot{x}^u(t, \xi)\|_2  & \leq
L_f^\prime(\|x^u(t, \xi)\|_{2} + 2 r_{\psi}+1),
\end{aligned}
\end{align}
where we recall that \( V_0 \in (0, \infty) \) is such that \( \|x_0(\xi)\|_2 \leq V_0 \) for all \( \xi \in \Xi \).

\item For each $(u,\xi)\in \mathbb{B}_{L^\infty(0,1;\mathbb{R}^m)}(0;2r_\psi) \times \Xi$,
the adjoint system
\eqref{eq:adjoint-equation}
has a unique solution $p^{u}(\cdot,\xi) \in W_1^\infty(0,1;\mathbb{R}^n)$, and for a.e.\ $t \in [0,1]$,
\begin{align}
\label{eq:adjointstabilityestimates}
\begin{aligned}
\|p^u(t,\xi)\|_2 & \leq
(1+ L_F^\prime) \mathrm{e}^{L_f^\prime},
\\
\|\dot{p}^u(t,\xi)\|_2 &
\leq L_f^\prime \|p^u(t,\xi)\|_2.
\end{aligned}
\end{align}
\item For all $(u,\xi) \in \mathrm{dom}(\psi) \times \Xi$,
\begin{align*}
\|\nabla_u T(u,\xi)\|_{L^\infty(0,1;\mathbb{R}^m)}
\leq
L_f^\prime (1+ L_F^\prime) \mathrm{e}^{L_f^\prime}.
\end{align*}
\item For all $(u,\xi) \in \mathrm{dom}(\psi) \times \Xi$,
\begin{align*}
\|D_t[\nabla_u T(u,\xi)]\|_{L^\infty(0,1;\mathbb{R}^m)}
&\leq
(L_f^{\prime})^2 (1 + L_F^\prime) \mathrm{e}^{L_f^\prime}
\\
& \quad
\cdot
\big[ (1 + 2r_\psi + V_0) \mathrm{e}^{L_f^\prime} + 2r_\psi + 2 \big].
\end{align*}
\end{enumerate}
\end{lemma}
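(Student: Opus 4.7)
The plan is to start from the product-rule expression \eqref{eq:TimeDerivativeGradientT} for $[\nabla_u T(u,\xi)]'(t)$ and bound the two resulting summands separately, feeding in the state and adjoint estimates already established in items (a) and (b) of this lemma. No new analytic tools are needed; the proof is an exercise in constant tracking.

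For the second summand $f_1(x^u(t,\xi),\xi)^T \dot{p}^u(t,\xi)$, the factor $\|f_1(x^u(t,\xi),\xi)\|_2 \leq L_f^\prime$ is immediate from \eqref{eq:rhs-bound}, and \eqref{eq:adjointstabilityestimates} gives $\|\dot{p}^u(t,\xi)\|_2 \leq L_f^\prime \|p^u(t,\xi)\|_2 \leq L_f^\prime (1+L_F^\prime)\mathrm{e}^{L_f^\prime}$. The product bound for this term is therefore of order $(L_f^\prime)^2 (1+L_F^\prime)\mathrm{e}^{L_f^\prime}$, which is already dominated by the target.

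For the first summand $D_t[f_1(x^u(t,\xi),\xi)^T]p^u(t,\xi)$, I would first apply the chain rule (as recorded via the cited chain/product rules in \cite{Leoni2017,Ambrosio1990}) to get $D_t[f_1(x^u(t,\xi),\xi)^T] = (D_x f_1(x^u(t,\xi),\xi)[\dot x^u(t,\xi)])^T$, whose spectral norm is bounded by $L_f^\prime \|\dot x^u(t,\xi)\|_2$ by \eqref{eq:rhs-bound}. Inserting the state-derivative estimate from \eqref{eq:statestabilityestimate}, together with $\|x^u(t,\xi)\|_2 + 1 \leq (1+V_0)\mathrm{e}^{2L_f^\prime r_{\psi}}$ (which actually falls out of the Gronwall argument behind \eqref{eq:statestabilityestimate} itself, applied to $\|x^u(t,\xi)\|_2 + 1$), yields $\|\dot x^u(t,\xi)\|_2 \leq 2 L_f^\prime r_{\psi}(1+V_0)\mathrm{e}^{2L_f^\prime r_{\psi}}$. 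Multiplying by $\|p^u(t,\xi)\|_2 \leq (1+L_F^\prime)\mathrm{e}^{L_f^\prime}$ from \eqref{eq:adjointstabilityestimates} gives a bound on the first summand.

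Finally, I would combine the two bounds, take the essential supremum in $t$, and use \Cref{assumption:feasibleset-regularizer} ($r_\psi \geq 1$) together with $L_f^\prime \geq 1$ and $V_0 \geq 1$ to absorb the additive term from the second summand into the multiplicative constant $6$ in front of the first-summand bound. The only place requiring some care, and the main obstacle in my view, is the bookkeeping of the $r_\psi$-factor that appears in the state-derivative estimate versus the $r_\psi$-free form of the target; this is handled by the elementary inequality $r_\psi + (\text{small}) \leq 3 r_\psi$ (or the analogous absorption into the prefactor $6$) once all constants are assumed $\geq 1$. Everything else is mechanical.
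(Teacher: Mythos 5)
Your proposal addresses only item (d) of the lemma; items (a), (b), and (c) are not proved but are \emph{used} as inputs (``the state and adjoint estimates already established in items (a) and (b) of this lemma''). Since those estimates are part of the statement to be proved, this is a genuine gap. The paper's proof obtains (a) -- existence, uniqueness, and the bound $\|x^u(t,\xi)\|_2\leq(1+V_0)\mathrm{e}^{2L_f^\prime r_\psi}$ -- from Proposition~5.6.5 in \cite{Polak1997} together with \eqref{eq:rhs-bound} and the ODE itself for the $\dot x$-estimate; it obtains (b) from Corollary~5.6.9 and Exercise~5.6.6 in \cite{Polak1997}, using the mean value theorem to get $\|D_xF(x,\xi)\|_2\leq L_F^\prime$ for the terminal condition, and the adjoint equation \eqref{eq:adjoint-equation} for the $\dot p$-estimate; and it obtains (c) from \eqref{eq:gradient-T} via $\|[\nabla_uT(u,\xi)](t)\|_2\leq\|f_1(x^u(t,\xi),\xi)\|_2\,\|p^u(t,\xi)\|_2\leq L_f^\prime\|p^u(t,\xi)\|_2$. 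None of this is long, but a complete proof must include it, in particular the Gronwall/comparison arguments and the verification that the trajectories stay in the neighborhood of $X_0\times\{\|u\|_2\leq 2r_\psi\}$ where the Lipschitz bounds of \Cref{assumption:dynamicalsystem} apply.

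For item (d) itself your route coincides with the paper's: split \eqref{eq:TimeDerivativeGradientT} into the two summands, bound $\|D_t[f_1(x^u(t,\xi),\xi)^T]\|_2\leq L_f^\prime\|\dot x^u(t,\xi)\|_2$ via the chain rule and \eqref{eq:rhs-bound}, and insert \eqref{eq:statestabilityestimate} and \eqref{eq:adjointstabilityestimates}. One caveat: the constant bookkeeping you flag is not actually resolved by the device you propose. Your first summand is bounded by $2(L_f^\prime)^2r_\psi(1+V_0)\mathrm{e}^{2L_f^\prime r_\psi}(1+L_F^\prime)\mathrm{e}^{L_f^\prime}$, which carries an explicit factor $r_\psi$, whereas the target $6(L_f^\prime)^2(1+L_F^\prime)\mathrm{e}^{L_f^\prime}(1+V_0)\mathrm{e}^{2L_f^\prime r_\psi}$ has no $r_\psi$ prefactor; an inequality of the form $r_\psi+(\text{small})\leq 3r_\psi$ still leaves an $r_\psi$ in front and cannot be absorbed into the constant $6$ when $r_\psi$ is large. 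You should either track this factor explicitly (obtaining a bound proportional to $r_\psi$) or explain why it can be dispensed with; as written, the final absorption step does not go through.
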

\begin{proof}
\begin{enumerate}[nosep,wide]
\item The existence and uniqueness follow from
Proposition 5.6.5 in \cite{Polak1997}.
Using \eqref{eq:rhs-bound}, we obtain
for all $t \in [0, 1]$,
\begin{align*}
\|x^u(t, \xi)\|_2
\leq \|x_0(\xi)\|_2 + L_f^\prime
\int_{0}^{t}
\big(\|x^u(\tau, \xi)\|_2 + 2 r_{\psi}+1) \,
\mathrm{d}\tau.
\end{align*}
Defining $y(t) \coloneqq \|x^u(t,\xi)\|_2 + 2r_{\psi} +1$,
this estimate ensures  $y(t) \leq y(0)  + L_f^\prime \int_{0}^{t} y(\tau) \, \mathrm{d}\tau$ for all $t \in [0,1]$.
Now, applying Gronwall's inequality yields the first stability estimate.
Using \eqref{eq:uncertain-ode},
and \eqref{eq:rhs-bound}, we obtain the second stability estimate.
\item The existence and uniqueness of the solution to the adjoint
equation follows from Corollary~5.6.9 in \cite{Polak1997}.

The mean value theorem ensures
$\|\nabla_x F(x,\xi)\|_2 \leq L_F^\prime$
for all $(x,\xi) \in X_0 \times \Xi$.
Combined with \eqref{eq:rhs-bound}, and
Exercise~5.6.6  in \cite{Polak1997},
we obtain the first adjoint state stability estimate.
The second is a consequence of the adjoint equation
\eqref{eq:adjoint-equation}.
\item Using \eqref{eq:rhs-bound},  and
 \eqref{eq:gradient-T}, we have
 for all $(u,\xi) \in \mathrm{dom}(\psi) \times \Xi$
 and a.e.\ $t \in (0,1)$,
\begin{align*}
\|[\nabla_u T(u,\xi)](t)\|_2 \leq
\|f_1(x^u(t,\xi),\xi)\|_2 \| p^u(t, \xi)\|_2
\leq L_f^\prime   \| p^u(t, \xi)\|_2.
\end{align*}
Combined with \eqref{eq:adjointstabilityestimates},
we obtain the estimate.

\item Using \eqref{eq:rhs-bound}
and \eqref{eq:TimeDerivativeGradientT}, we obtain
\begin{align*}
\|D_t[\nabla_u T(u,\xi)]\|_{L^\infty(0,1;\mathbb{R}^m)}
&\leq
L_f^\prime\|p^u(\cdot,\xi)\|_{L^\infty(0,1;\mathbb{R}^{n})}
\|\dot{x}^u(\cdot,\xi)\|_{L^\infty(0,1;\mathbb{R}^{n})}
\\
& \quad +
L_f^\prime \|\dot{p}^u(\cdot,\xi)\|_{L^\infty(0,1;\mathbb{R}^{n})}.
\end{align*}
Now,  \eqref{eq:statestabilityestimate} and
\eqref{eq:adjointstabilityestimates} ensure the bound.

\end{enumerate}
\end{proof}

The proof of \Cref{thm:gradientregularity} is now
a direct consequence of \Cref{lem:stateadjointestimates}.

\begin{proof}[{Proof of \Cref{thm:gradientregularity}}]
Since
\begin{align*}
\|\nabla_u T(u,\xi)\|_{W_1^\infty(0,1;\mathbb{R}^m)} =
\|\nabla_u T(u,\xi)\|_{L^\infty(0,1;\mathbb{R}^m)}+
\|D_t[\nabla_u T(u,\xi)]\|_{L^\infty(0,1;\mathbb{R}^m)},
\end{align*}
\Cref{lem:stateadjointestimates} ensures the assertion.
\end{proof}

\section{Control regularity via gradient regularity}
\label{sec:controlregularity}

We show that SAA critical points are contained in
a compact deterministic subset in
the control space $L^2(0,1;\mathbb{R}^m)$.
The technical results presented in this section form the basis
for establishing our mean convergence rates in
\cref{sec:ratesvalues,sec:ratescritialponts}.
We recall that $\chi$ is the true problem's criticality
measure, while $\widehat{\chi}_N$ is that of the SAA problem, as defined in the introduction.
We call $u \in \mathrm{dom}(\psi)$ a critical point
of \eqref{eq:optimalcontrolproblem} if
$\chi(u) = 0$. Moreover,
we refer to $u_N \in \mathrm{dom}(\psi)$ as a critical point of
the SAA problem if $\widehat{\chi}_N(u_N) = 0$.

We define the subset $\mathcal{C}\subset L^2(0,1;\mathbb{R}^m)$ by
\begin{align}
\mathcal{C} \coloneqq
\mathrm{prox}_{(1/\alpha)\psi_\alpha}
(- (1/\alpha)\overline{\mathbb{B}}_{W_1^2(0,1;\mathbb{R}^m)}(0;R)),
\end{align}
where we recall $\psi_\alpha(u) =\psi(u) -  (\alpha/2) \|u\|_{L^2(0,1;\mathbb{R}^m)}^2$.

Our next theorem establishes  the covering numbers of $\mathcal{C}$.
Moreover, it demonstrates that each critical point of the
control problem
\eqref{eq:optimalcontrolproblem} and of
its SAA problem
\eqref{eq:saaproblem} is contained in $\mathcal{C}$.

\begin{theorem}
\label{thm:controlregularity}
If \Cref{assumption:feasibleset-regularizer,assumption:dynamicalsystem,assumption:essentiallyboundedtrajectories}
hold, then the following statements are valid.
\begin{enumerate}[nosep]
\item The diameter
of $\mathcal{C}$ in $L^2(0,1;\mathbb{R}^m)$ is at most $2r_{\psi}$.
\item For all $\nu > 0$, the $\nu$-covering number
of $\mathcal{C}$
with respect to the $L^2(0,1;\mathbb{R}^m)$-norm satisfies
\begin{align*}
\log_2 (\mathcal{N}(\mathcal{C},\nu))
\leq \varrho\sqrt{m}(Rm/(\alpha\nu)),
\end{align*}
where $\varrho >0$ is a
constant independent of $m$, $\nu$, $R$, and $\alpha$.
\item We have
\begin{align*}
\int_{0}^{r_{\psi}}
\sqrt{\ln(2\mathcal{N}(\mathcal{C},\varepsilon))}
\, \mathrm{d}\varepsilon
\leq
2\sqrt{r_{\psi}\varrho(Rm^{3/2}/\alpha)} + r_\psi.
\end{align*}
\item Each critical point of
\eqref{eq:optimalcontrolproblem} and of
\eqref{eq:saaproblem} is contained in $\mathcal{C}$.
\end{enumerate}
\end{theorem}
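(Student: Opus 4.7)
My approach is to verify the four claims in turn; (a)--(c) are quantitative statements about $\mathcal{C}$ that follow from proximal calculus together with an appendix covering-number estimate, while (d) is the conceptual core, identifying critical points as proximal images of gradients. For part (a), I would observe that the range of $\mathrm{prox}_{(1/\alpha)\psi_\alpha}$ is contained in $\mathrm{dom}(\psi_\alpha) = \mathrm{dom}(\psi)$, so every $u \in \mathcal{C}$ satisfies the $L^\infty$-bound $\|u\|_{L^\infty(0,1;\mathbb{R}^m)} \leq r_\psi$ from \Cref{assumption:feasibleset-regularizer}. Since $(0,1)$ has unit length, this also bounds $\|u\|_{L^2(0,1;\mathbb{R}^m)}$ by $r_\psi$, and the triangle inequality gives the claimed diameter bound.

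For part (b), the starting point is that the proximity operator of a proper, lsc, convex function on a Hilbert space is $1$-Lipschitz. Consequently, any $\nu$-net of $-(1/\alpha)\overline{\mathbb{B}}_{W_1^2(0,1;\mathbb{R}^m)}(0;R)$ with respect to the $L^2$-metric -- whose centers may be taken inside the ball -- is pushed forward by $\mathrm{prox}_{(1/\alpha)\psi_\alpha}$ to an $\nu$-net of $\mathcal{C}$ with centers in $\mathcal{C}$. The estimate thus reduces to the $L^2$-covering number of a scaled $W_1^2$-ball of radius $R/\alpha$ for $\mathbb{R}^m$-valued functions, which is exactly what \cref{sec:coveringnumbers} supplies; the $\sqrt{m}$ and $Rm/(\alpha\nu)$ scalings reflect the target dimension and the ball radius. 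Part (c) is a routine computation: apply $\sqrt{a+b}\leq \sqrt{a}+\sqrt{b}$ to split $\sqrt{\ln(2\mathcal{N}(\mathcal{C},\varepsilon))}$ into a bounded piece and one behaving like $\varepsilon^{-1/2}$, integrate using $\int_0^{r_\psi}\varepsilon^{-1/2}\,\mathrm{d}\varepsilon = 2\sqrt{r_\psi}$, and absorb constants by invoking $r_\psi \geq 1$.

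For part (d), I would characterize criticality through a subdifferential inclusion. Writing $\chi(u)=0$ as the fixed-point identity $u = \mathrm{prox}_{\psi_\alpha}(u-\nabla g(u) - \alpha u)$ and using $\psi_\alpha = \psi - (\alpha/2)\|\cdot\|^2$, the first-order optimality condition for the proximal argmin reduces, after cancellation of $\pm\alpha u$ contributions, to $-\nabla g(u) \in \partial\psi(u)$. The same proximal calculation applied to the definition of $\mathcal{C}$ shows that $u = \mathrm{prox}_{(1/\alpha)\psi_\alpha}(z)$ is equivalent to $\alpha z \in \partial\psi(u)$; taking $z = -(1/\alpha)\nabla g(u)$ therefore realizes each critical point of \eqref{eq:optimalcontrolproblem} as a proximal image. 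By \Cref{thm:gradientregularity}, $\nabla_u T(u,\xi) \in \overline{\mathbb{B}}_{W_1^2(0,1;\mathbb{R}^m)}(0;R)$ for every $\xi$; since this ball is closed and convex, Bochner expectations and finite convex combinations remain inside it, so both $\nabla g(u)$ and $\nabla \widehat g_N(u)$ lie in $\overline{\mathbb{B}}_{W_1^2(0,1;\mathbb{R}^m)}(0;R)$, placing true and SAA critical points alike in $\mathcal{C}$.

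The main obstacle I foresee is the covering-number bound behind part (b): obtaining the precise $\sqrt{m}\,Rm/(\alpha\nu)$ dependence for $\mathbb{R}^m$-valued $W_1^2$-balls measured in the $L^2$-norm is the genuinely technical ingredient and explains why the authors devote an appendix to it. Once that estimate is available, parts (a), (c), and (d) amount to careful bookkeeping of proximal identities and first-order optimality.
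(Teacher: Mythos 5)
Your proposal is correct and follows essentially the same route as the paper: domain containment for (a), nonexpansiveness of the proximity operator plus the appendix covering-number bound for the rescaled $W_1^2$-ball for (b), the elementary entropy-integral computation for (c), and the proximal fixed-point characterization of critical points combined with \Cref{thm:gradientregularity} for (d). The only detail you gloss over is that, for the expectation $\mathbb{E}[\nabla_u T(\bar u,\xi)]$ to land in $\overline{\mathbb{B}}_{W_1^2(0,1;\mathbb{R}^m)}(0;R)$, one must know this ball is closed (hence, being convex, weakly closed) in $L^2(0,1;\mathbb{R}^m)$, which the paper secures via reflexivity of $W_1^2$ before invoking the Bochner-integral mean-value property.
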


\begin{proof}
\begin{enumerate}[nosep,wide]
\item We have $\mathcal{C} \subset \mathrm{dom}(\psi_\alpha)
=\mathrm{dom}(\psi)$.
\Cref{assumption:feasibleset-regularizer} ensures
that $\mathrm{dom}(\psi) \subset
 \overline{\mathbb{B}}_{L^\infty(0,1;\mathbb{R}^m)}(0;r_\psi)$.
 Hence the diameter
 of $\mathcal{C}$ in $L^2(0,1;\mathbb{R}^m)$ is at most $2r_{\psi}$.
 \item
We define the constant $\varepsilon \coloneqq (\alpha/R) \nu$.
\Cref{prop:coveringnumbers} ensures the
existence of points
$v_1, \ldots, v_K \in \overline{\mathbb{B}}_{W_1^2(0,1;\mathbb{R}^m)}(0;1)$
such that $\|v - v_{k(v)}\|_{L^2(0,1;\mathbb{R}^m)}
\leq \varepsilon$ for all $v \in \overline{\mathbb{B}}_{W_1^2(0,1;\mathbb{R}^m)}(0;1)$, where
$k(v) \coloneqq \mathrm{argmin}_{k \in \{1, \ldots, K\}} \|v - v_{k}\|_{L^2(0,1;\mathbb{R}^m)}$.
Additionally,
$K \leq 2^{\varrho\sqrt{m}(m/\varepsilon)}$,
where
$\varrho > 0$ independent of $m$ and $\varepsilon$.

Let $w\in  \overline{\mathbb{B}}_{W_1^2(0,1;\mathbb{R}^m)}(0;R)$.
Then $v \coloneqq w/R \in \overline{\mathbb{B}}_{W_1^2(0,1;\mathbb{R}^m)}(0;1)$.
Since proximity operators are nonexpansive,
\begin{align*}
&\|\mathrm{prox}_{(1/\alpha)\psi_\alpha}
(- (1/\alpha)w)-
\mathrm{prox}_{(1/\alpha)\psi_\alpha}
(- (1/\alpha)Rv_{k(v)})\|_{L^2(0,1;\mathbb{R}^m)}
\\
& \quad \leq
(1/\alpha)
\|w-Rv_{k(v)}\|_{L^2(0,1;\mathbb{R}^m)}
=
(R/\alpha)
\|v-v_{k(v)}\|_{L^2(0,1;\mathbb{R}^m)}
\leq (R/\alpha) \varepsilon  = \nu.
\end{align*}

 \item
 Combining the previous part and
  \Cref{example:entropyintegralbound}
  (applied with $D \leq 2r_{\psi}$
  and $c=\varrho\sqrt{m}(Rm/\alpha)$),
  we obtain the integral
  bound.
  \item
  Let $\bar u$ be a critical point of \eqref{eq:optimalcontrolproblem}.
  \Cref{lem:optimalityconditions-compositeproblem,lem:ReducedObjectiveLipschitzGateaux}
  ensure
  $$\bar u = \mathrm{prox}_{(1/\alpha)\psi_\alpha}(-(1/\alpha)\nabla_{u} \mathbb{E}[T(\bar u,\xi)]).$$
  Moreover,
  \Cref{thm:gradientregularity} and the dominated convergence theorem
  yield $\nabla_u \mathbb{E}[T(\bar u,\xi)]
  = \mathbb{E}[\nabla_u T(\bar u,\xi)]
  \in L^2(0,1;\mathbb{R}^m)$.
\Cref{thm:gradientregularity}
also ensures
$\nabla_u T(u,\xi) \in W_1^2(0,1;\mathbb{R}^m)$
for all $(u,\xi) \in \mathrm{dom}(\psi) \times \Xi$.
Since $W_1^2(0,1;\mathbb{R}^m)$ is reflexive,
$\overline{\mathbb{B}}_{W_1^2(0,1;\mathbb{R}^m)}(0;R)$ is closed in
$L^2(0,1;\mathbb{R}^m)$.
Combined with Proposition~1.2.12
in \cite{Hytoenen2016},
we find that
$\mathbb{E}[\nabla_u T(\bar u,\xi)]
  \in W_1^2(0,1;\mathbb{R}^m)$.
Hence $\bar u \in \mathcal{C}$.

Now, let $\bar u_N$ be a critical point of the SAA problem
\eqref{eq:saaproblem}. \Cref{lem:optimalityconditions-compositeproblem} yields
$\bar u_N = \mathrm{prox}_{(1/\alpha)\psi_\alpha}(-(1/\alpha)\nabla \widehat{g}_N(\bar u_N))$.
Since $\nabla_u T(u,\xi) \in W_1^2(0,1;\mathbb{R}^m)$
for all $(u,\xi) \in \mathrm{dom}(\psi) \times \Xi$,
we have $\nabla \widehat{g}_N(\bar u_N) = (1/N) \sum_{i=1}^N \nabla_u T(\bar u_N,\xi^i) \in W_1^2(0,1;\mathbb{R}^m)$.
Hence $\bar u_N \in \mathcal{C}$.
\end{enumerate}
\end{proof}

\section{Convergence rates of optimal values}
\label{sec:ratesvalues}

We establish mean convergence rates
for SAA optimal values, our first main contribution.
Throughout the section we assume that the control problems
\eqref{eq:optimalcontrolproblem} and \eqref{eq:saaproblem}
have solutions.
The existence of solutions can be established using standard arguments, once it
is shown that the parameterized objective function is weakly lower semicontinuous
in its first argument; that is, $T(\cdot, \xi)$ is weakly lower semicontinuous
on $\mathrm{dom}(\psi)$
for each $\xi \in \Xi$. This property follows by showing that the mapping
$\mathrm{dom}(\psi) \ni u \mapsto x^u(1,\xi)
\in \mathbb{R}^n$ is weakly continuous. Lemma~2.4 in
\cite{Scagliotti2023} proves  a weak-to-strong continuity result
of solution operators
for control-affine initial value problems. Its proof may be adapted to establish
the weak continuity required here. Now,
let $u^*$ be a solution to \eqref{eq:optimalcontrolproblem}
and for each $N \in \mathbb{N}$,
let $u_N^* \colon \Omega \to L^2(0,1;\mathbb{R}^m)$ be  measurable such that for each $\omega \in \Omega$,
$u_N^*(\omega)$ is a solution to \eqref{eq:saaproblem}.
Moreover, we denote by $v^*$  the optimal value of
\eqref{eq:optimalcontrolproblem}
and by $\widehat{v}_N^* \colon\Omega \to \mathbb{R}$ that of \eqref{eq:saaproblem}, which we also assume to be measurable.
Along with the existence of solutions, the measurability of the SAA
optimal values and solutions can be established using standard arguments.

\begin{theorem}
\label{thm:convergence-optimal-values}
If \Cref{assumption:feasibleset-regularizer,assumption:dynamicalsystem,assumption:essentiallyboundedtrajectories}
hold and $u_0 \in \mathcal{C}$,
then for all $N \in \mathbb{N}$,
\begin{align*}
\mathbb{E}[|\widehat{v}_N^*-v^*|] &\leq
\frac{(\mathbb{E}[(T(u_0,\xi)-\mathbb{E}[T(u_0,\xi)])^2])^{1/2}}
{\sqrt{N}}
+  \frac{16 \sqrt{3} L_T^\prime}{\sqrt{N}}
\Bigg[
\bigg(\frac{r_{\psi}\varrho R m^{3/2}}{\alpha}\bigg)^{1/2}
+ r_{\psi}\Bigg].
\end{align*}
\end{theorem}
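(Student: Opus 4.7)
The plan is to reduce the optimal-value gap $|\widehat v_N^* - v^*|$ to a uniform deviation of $\widehat g_N - g$ over the deterministic set $\mathcal{C}$, and then invoke the metric entropy bound~\eqref{eq:metricentropybound}. Since $u^*$ and $u_N^*$ are solutions, they are in particular critical points of~\eqref{eq:optimalcontrolproblem} and \eqref{eq:saaproblem}, so \Cref{thm:controlregularity}(d) places both of them in $\mathcal{C}$. The regularizer $\psi$ cancels in the standard two-sided SAA comparison
$$\widehat v_N^* - v^* \leq \widehat g_N(u^*) - g(u^*) \quad \text{and} \quad v^* - \widehat v_N^* \leq g(u_N^*) - \widehat g_N(u_N^*),$$
which yields $|\widehat v_N^* - v^*| \leq \sup_{u \in \mathcal{C}} |\widehat g_N(u) - g(u)|$ and reduces the task to bounding the expectation of this supremum.

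Next, I would apply~\eqref{eq:metricentropybound} to the centered mapping $G(u,\xi) \coloneqq T(u,\xi) - g(u)$, for which $\widehat G_N(u) = \widehat g_N(u) - g(u)$ and $\mathbb{E}[G(u,\xi)] = 0$. Measurability and the Carath\'eodory property follow from \Cref{lem:ReducedObjectiveLipschitzGateaux}(c). To verify the sub-Gaussian-type hypothesis, the $L^2$-Lipschitz estimate for $T(\cdot,\xi)$ in \Cref{lem:ReducedObjectiveLipschitzGateaux}(a) together with the triangle inequality applied to the mean yields the deterministic bound $|G(u,\xi) - G(v,\xi)| \leq 2 L_T^\prime \|u - v\|_{L^2(0,1;\mathbb{R}^m)}$; the elementary inequality $\cosh(s) \leq \mathrm{e}^{s^2/2}$ then produces the hypothesis with $M = 2 L_T^\prime$.

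The final step is to combine~\eqref{eq:metricentropybound} with the geometric information on $\mathcal{C}$ from \Cref{thm:controlregularity}: the diameter bound $2 r_\psi$ in part~(a) truncates the entropy integral at $r_\psi$, and part~(c) evaluates that integral by $2 r_\psi \sqrt{1 + \varrho \sqrt{m}\, Rm/\alpha}$. Multiplying by $4\sqrt{3}\,M/\sqrt{N} = 8\sqrt{3}\, L_T^\prime / \sqrt{N}$ yields the advertised constant $16\sqrt{3}\, L_T^\prime r_\psi$ in front of $N^{-1/2}(1+\varrho\sqrt{m}Rm/\alpha)^{1/2}$. The reference-point term $\mathbb{E}[|\widehat G_N(u_0)|]$ is handled by Jensen's inequality and independence of the samples, producing the variance summand $(\mathbb{E}[(T(u_0,\xi) - \mathbb{E}[T(u_0,\xi)])^2])^{1/2}/\sqrt{N}$.

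The main technical obstacle I anticipate is a mild bookkeeping one: the bound~\eqref{eq:metricentropybound} requires its base point to lie in the set over which the supremum is taken, whereas $u_0$ is only assumed to lie in $\mathrm{dom}(\psi)$. This is resolved by applying~\eqref{eq:metricentropybound} to $\mathcal{C} \cup \{u_0\} \subset \overline{\mathbb{B}}_{L^\infty(0,1;\mathbb{R}^m)}(0;r_\psi)$; the $L^2$-diameter is still at most $2 r_\psi$, and the covering numbers differ from those of $\mathcal{C}$ by at most one, so the entropy integral estimate from \Cref{thm:controlregularity}(c) still controls the bound up to an absorbable constant.
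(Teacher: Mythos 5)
Your proposal follows essentially the same route as the paper's proof: reduce $|\widehat v_N^*-v^*|$ to $\sup_{u\in\mathcal{C}}|\widehat g_N(u)-g(u)|$ via \Cref{thm:controlregularity}, center $T$ to get $G$ with Lipschitz constant $2L_T^\prime$ and hence \eqref{eq:mappingsubgaussian} with $M=2L_T^\prime$, and combine \Cref{prop:uniformexponentialtailboundaverage}, \Cref{rem:uniformexponentialtailboundaverage}, and the entropy integral bound of \Cref{thm:controlregularity}(c) to obtain the stated constants. Your closing remark about the base point $u_0\in\mathrm{dom}(\psi)$ possibly lying outside $\mathcal{C}$ is a legitimate bookkeeping point that the paper's proof passes over silently; your patch is sound, though note it would perturb the explicit constant slightly rather than leave it untouched.
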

\begin{proof}
Since each solution to \eqref{eq:optimalcontrolproblem}
is a critical point of  \eqref{eq:optimalcontrolproblem},
\Cref{thm:controlregularity} ensures  that the solution set
of \eqref{eq:optimalcontrolproblem} is contained
in $\mathcal{C}$. In particular, $u^* \in \mathcal{C}$.
Analogously, each SAA solution is contained in
$\mathcal{C}$. Hence,
\begin{align*}
|\widehat{v}_N^*-v^*|\leq \sup_{u \in \mathcal{C}}
\bigg|\mathbb{E}[T(u,\xi)]-
\frac{1}{N}\sum_{i=1}^N T(u,\xi^i) \bigg|.
\end{align*}

Next, we estimate the supremum's expected value using \Cref{prop:uniformexponentialtailboundaverage}.
For each $\xi \in \Xi$,
\Cref{lem:ReducedObjectiveLipschitzGateaux} ensures that
 $T(\cdot,\xi)$ is
Lipschitz continuous with Lipschitz constant $L_T^\prime$
on $\mathrm{dom}(\psi)$.
We define
$G(u, \xi)  \coloneqq T(u,\xi) - \mathbb{E}[T(u,\xi)]$.
For each $\xi \in \Xi$, $G(\cdot, \xi)$
is Lipschitz continuous with Lipschitz constant $2L_T^\prime$
on $\mathrm{dom}(\psi)$.
Hence $G$ satisfies \eqref{eq:mappingsubgaussian} with
$M = 2 L_T^\prime$.
Now \Cref{prop:uniformexponentialtailboundaverage,%
rem:uniformexponentialtailboundaverage} ensure
\begin{align*}
\mathbb{E}[|\widehat{v}_N^*-v^*|] &\leq
\frac{(\mathbb{E}[(T(u_0,\xi)-\mathbb{E}[T(u_0,\xi)])^2])^{1/2}}
{\sqrt{N}}
+  \frac{8 \sqrt{3} L_T^\prime}{\sqrt{N}} \int_{0}^{r_{\psi}}
\sqrt{\ln(2\mathcal{N}(\mathcal{C},  \varepsilon))}
\, \mathrm{d}\varepsilon.
\end{align*}
Combined with the
entropy integral bound provided by \Cref{thm:controlregularity}, we obtain
the assertion.
\end{proof}

\section{Convergence rates for criticality measures}
\label{sec:ratescritialponts}

We establish mean convergence rates
for SAA critical points, our second main contribution. We measure accuracy of SAA critical points
via the criticality measure defined in \eqref{eq:truecriticalitymeausre}.

\begin{theorem}
Let \Cref{assumption:feasibleset-regularizer,assumption:dynamicalsystem,assumption:essentiallyboundedtrajectories} hold.
For each $N \in \mathbb{N}$,
let the control $u_N^*\colon \Omega \to L^2(0,1;\mathbb{R}^m)$ be  measurable such that  for each $\omega \in \Omega$,
$u_N^*(\omega) \in \mathrm{dom}(\psi)$ is a critical point
of the SAA problem \eqref{eq:saaproblem}.
If  $u_0 \in \mathcal{C}$,
then for all $N \in \mathbb{N}$,
\begin{align*}
\mathbb{E}[\chi(u_N^*)] &\leq
\frac{(\mathbb{E}\|\nabla_u T(u_0,\xi)-\mathbb{E}[\nabla_uT(u_0,\xi)]\|_{L^2(0,1;\mathbb{R}^m)}^2)^{1/2}}
{\sqrt{N}}
\\
& \quad +  \frac{16 \sqrt{3} L_{\nabla T}^\prime}{\sqrt{N}}
\Bigg[
\bigg(\frac{r_{\psi}\varrho R m^{3/2}}{\alpha}\bigg)^{1/2}
+ r_{\psi}\Bigg].
\end{align*}
\end{theorem}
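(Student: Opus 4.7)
The plan is to mirror the proof of \Cref{thm:convergence-optimal-values}, replacing the scalar integrand $T$ by the Hilbert-space-valued integrand $\nabla_u T$. The first step is to reduce the true criticality measure evaluated at an SAA critical point to a gradient-fluctuation supremum. Since $\widehat{\chi}_N(u_N^*) = 0$ gives $u_N^* = \mathrm{prox}_{\psi_\alpha}(u_N^* - \nabla \widehat{g}_N(u_N^*) - \alpha u_N^*)$, nonexpansivity of the proximity operator yields
\begin{align*}
\chi(u_N^*) &= \|\mathrm{prox}_{\psi_\alpha}(u_N^* - \nabla \widehat{g}_N(u_N^*) - \alpha u_N^*) - \mathrm{prox}_{\psi_\alpha}(u_N^* - \nabla g(u_N^*) - \alpha u_N^*)\|_{L^2(0,1;\mathbb{R}^m)} \\
&\leq \|\nabla \widehat{g}_N(u_N^*) - \nabla g(u_N^*)\|_{L^2(0,1;\mathbb{R}^m)}.
\end{align*}
By \Cref{thm:controlregularity}(d), $u_N^* \in \mathcal{C}$, so I can upper bound the right-hand side by $\sup_{u \in \mathcal{C}} \|\nabla \widehat{g}_N(u) - \nabla g(u)\|_{L^2(0,1;\mathbb{R}^m)}$.

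The second step is to apply the uniform expectation bound \eqref{eq:metricentropybound} with $H = L^2(0,1;\mathbb{R}^m)$, $C = \mathcal{C}$, and the centered Carath\'eodory mapping
\begin{align*}
G(u,\xi) \coloneqq \nabla_u T(u,\xi) - \mathbb{E}[\nabla_u T(u,\xi)].
\end{align*}
By \Cref{lem:ReducedObjectiveLipschitzGateaux}(b), $\nabla_u T(\cdot,\xi)$ is Lipschitz on $\mathrm{dom}(\psi)$ with constant $L_{\nabla T}^\prime$, so $G(\cdot,\xi)$ is $2L_{\nabla T}^\prime$-Lipschitz; this pointwise Lipschitz bound immediately implies the sub-Gaussian-type estimate required in \Cref{prop:uniformexponentialtailboundaverage} with $M = 2L_{\nabla T}^\prime$, since $\cosh(\lambda s) \leq \exp(\lambda^2 s^2/2)$ for bounded-in-$\omega$ deterministic $s$.

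The third step is to assemble the bound. Taking $u_0 \in \mathrm{dom}(\psi) \subset \mathcal{C}$ as the base point and invoking \Cref{rem:uniformexponentialtailboundaverage} (Jensen applied to the iid-empirical-mean at $u_0$), the initial term is bounded by $(\mathbb{E}\|\nabla_u T(u_0,\xi)-\mathbb{E}[\nabla_u T(u_0,\xi)]\|_{L^2(0,1;\mathbb{R}^m)}^2)^{1/2}/\sqrt{N}$. The diameter of $\mathcal{C}$ in $L^2(0,1;\mathbb{R}^m)$ is at most $2r_\psi$ by \Cref{thm:controlregularity}(a), so the entropy integral runs from $0$ to $r_\psi$ and \Cref{thm:controlregularity}(c) gives $\int_0^{r_\psi} \sqrt{\ln(2\mathcal{N}(\mathcal{C},\varepsilon))}\,\mathrm{d}\varepsilon \leq 2r_\psi\sqrt{1 + \varrho\sqrt{m}(Rm/\alpha)}$. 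Combining with $M = 2L_{\nabla T}^\prime$ produces the constant $4\sqrt{3}\cdot 2L_{\nabla T}^\prime \cdot 2r_\psi = 16\sqrt{3}\,L_{\nabla T}^\prime r_\psi$, matching the claimed second term.

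The main obstacle is not any of the above estimates, which are essentially bookkeeping once \Cref{thm:controlregularity} is in hand; rather, it is the implicit measurability check needed to make the chain rigorous when $u_N^*$ is random. Specifically, I must justify that $\nabla g(u_N^*) = \mathbb{E}[\nabla_u T(u_N^*,\xi)\,|\,u_N^*]$ evaluated at the random point $u_N^*$ is legitimately dominated by $\sup_{u \in \mathcal{C}} \|\nabla \widehat{g}_N(u) - \nabla g(u)\|_{L^2(0,1;\mathbb{R}^m)}$ inside the expectation, and that this supremum is itself measurable. The measurability of $u_N^*$ assumed in the hypothesis, the Carath\'eodory property of $\nabla_u T$ from \Cref{lem:ReducedObjectiveLipschitzGateaux}(c), the continuity of $u \mapsto \nabla_u T(u,\xi)$ (Lipschitz in $u$), and separability of $\mathcal{C}$ in $L^2(0,1;\mathbb{R}^m)$ together handle these issues, after which taking expectations on both sides of the pointwise inequality $\chi(u_N^*) \leq \sup_{u \in \mathcal{C}} \|\nabla \widehat{g}_N(u) - \nabla g(u)\|_{L^2(0,1;\mathbb{R}^m)}$ finishes the argument.
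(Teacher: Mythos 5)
Your proposal is correct and follows essentially the same route as the paper's own proof: nonexpansivity of the proximity operator to bound $\chi(u_N^*)$ by the gradient-fluctuation supremum over $\mathcal{C}$, then \Cref{prop:uniformexponentialtailboundaverage} with $G(u,\xi)=\nabla_u T(u,\xi)-\mathbb{E}[\nabla_u T(u,\xi)]$ and $M=2L_{\nabla T}^\prime$, combined with the entropy integral bound from \Cref{thm:controlregularity}. One small slip: you write $\mathrm{dom}(\psi)\subset\mathcal{C}$, but the inclusion established in the paper is the reverse, $\mathcal{C}\subset\mathrm{dom}(\psi)$ --- this does not affect the argument beyond the choice of base point, an imprecision the paper's own proof shares.
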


\begin{proof}
Our proof follows steps similar to those in the proof of
\Cref{thm:convergence-optimal-values}.
For all $u \in \mathrm{dom}(\psi)$,
the nonexpansiveness of
proximity operators ensures
\begin{align*}
|\chi(u) - \widehat{\chi}_N(u)|
\leq
\|\nabla g(u) - \nabla \widehat{g}_N(u)\|_{L^2(0,1;\mathbb{R}^m)}.
\end{align*}
Combined with $\widehat{\chi}_N(u_N^*)  = 0$ and
$u_N^* \in \mathcal{C}$, we obtain
\begin{align*}
\mathbb{E}[\chi(u_N^*)]
\leq
\mathbb{E}\big[\sup_{u \in \mathcal{C}}\,
\|\nabla g(u) - \nabla \widehat{g}_N(u)\|_{L^2(0,1;\mathbb{R}^m)}\big].
\end{align*}

Next, we estimate the above right-hand side
 using \Cref{prop:uniformexponentialtailboundaverage}.
 For each $\xi \in \Xi$,
 \Cref{lem:ReducedObjectiveLipschitzGateaux} ensures that
 $\nabla_u T(\cdot,\xi)$ is
 Lipschitz continuous with Lipschitz constant $L_{\nabla T}^\prime $
 on $\mathrm{dom}(\psi)$.
 We define
 $G(u, \xi)  \coloneqq \nabla_u T(u,\xi) - \mathbb{E}[\nabla_u T(u,\xi)]$.
 For each $\xi \in \Xi$, $G(\cdot, \xi)$
 is Lipschitz continuous with Lipschitz constant $2L_{\nabla T}^\prime $
 on $\mathrm{dom}(\psi)$.
 Hence $G$ satisfies \eqref{eq:mappingsubgaussian} with
 $M = 2 L_{\nabla T}^\prime $.
 Combined with \Cref{thm:controlregularity,prop:uniformexponentialtailboundaverage,%
prop:uniformexponentialtailboundaverage,%
 rem:uniformexponentialtailboundaverage},
 we obtain the assertion.
\end{proof}

\section{Numerical illustrations}
\label{sec:numericalillustrations}

We consider two numerical examples. The main purpose of our numerical
illustrations is to demonstrate the typical Monte Carlo convergence
rate for the SAA optimal values and SAA critical points established
in \cref{sec:ratesvalues,sec:ratescritialponts}.
Before discussing these examples,
we provide details on the discretization of the control problems,
implementation details, and a description of our simulation output.
We also introduce some terminology.

We refer to
\begin{align*}
\min_{u \in L^2(0,1;\mathbb{R}^m)} \,
F(x^u(1,\mathbb{E}[\xi]), \mathbb{E}[\xi]) + \psi(u),
\end{align*}
as the nominal problem to \eqref{eq:optimalcontrolproblem}.
It is obtained by replacing the random element $\xi$ by its expected value
and simulating the dynamical system \eqref{eq:uncertain-ode}
with this expected value. For both numerical examples, we visualize the nominal solution and the solution to a certain SAA problem, which is introduced below
and referred to as the reference problem.
These graphical illustrations demonstrate that nominal solutions and reference solutions can have different characteristics.

To approximate the optimal value of \eqref{eq:optimalcontrolproblem}
and the criticality measures $\chi$,
we generated $N_{\text{ref}} \coloneqq 4096$ samples
using a realization of a scrambled Sobol'
sequence and transformed these samples to take values in
$\Xi$. For both examples, $\Xi$ is the Cartesian product of finitely many closed intervals and the components of $\xi$ are uniformly distributed. We denote the samples by $\zeta^{1}, \ldots, \zeta^{N_{\text{ref}}}$,
and formulate the reference problem
\begin{align*}
\min_{u \in L^2(0,1;\mathbb{R}^m)} \,
\frac{1}{N_{\text{ref}}} \sum_{i=1}^{N_{\text{ref}}}
F(x^u(1,\zeta^i), \zeta^i) + \psi(u).
\end{align*}
Moreover, we approximate the criticality measure $\chi$ by
\begin{align*}
\chi_{\text{ref}}(u) \coloneqq
\Big\|u-
\mathrm{prox}_{\psi_\alpha}\Big(u-
\frac{1}{N_{\text{ref}}} \sum_{i=1}^{N_\text{ref}} \nabla_u T(u,\zeta^{i})-\alpha  u\Big)
\Big\|_{L^2(0,1;\mathbb{R}^m)}.
\end{align*}

In addition to providing plots of the nominal and reference solutions, we
compare their quality using performance metrics from~\cite{Birge1982}.
We define the (reference)
expected performance of the nominal policy (EPP) as
the reference SAA objective evaluated at the nominal solution, and the reference
performance (RP) as the optimal value of the reference SAA problem. The
reference  value of
the stochastic solution (VSS) is defined as $\text{EPP} - \text{RP}$, and the
relative VSS as $\text{VSS} / \text{EPP}$.

The dynamical systems are discretized using
multiple shooting with a fixed step explicit fourth-order Runge--Kutta method.
The control space is discretized using piecewise constant functions
defined on a uniform grid over $[0,t_f]$ with $q \coloneqq 50$ subintervals,
where $t_f > 0$ is a fixed final time.
Consequently, the solutions to the SAA problems
and the criticality measures depend on the discretization parameter $q$. For notational convenience, this discretization parameter is omitted in the notation.
We generated Sobol' sequences using \texttt{SciPy}'s
quasi-Monte Carlo capabilities \cite{Roy2023}.
To model and solve the discretized SAA problems, we used the package
\texttt{EnsembleControl} \cite{Milz2024},
which is built on the software framework
\texttt{CasADi} \cite{Andersson2019}, and
the optimization solver \texttt{IPOPT} \cite{Waechter2006}.
We used \texttt{IPOPT} with its default settings.
Our simulation code and output are
archived at \cite{Milz2024c}. Simulations were performed on a laptop equipped with a 12th Gen Intel(R) Core(TM) i7-1260P processor and 16 GB of RAM\@.

\subsection{Harmonic oscillator}
\label{sec:harmonicoscillator}

\begin{figure}[t]
	\centering
	\includegraphics[width=0.495\textwidth]{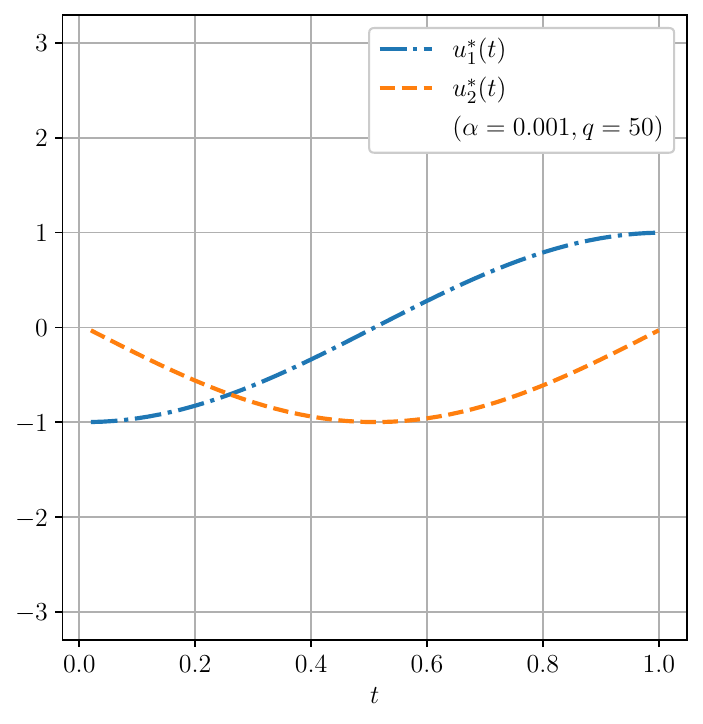}
	\subfloat{%
		\includegraphics[width=0.495\textwidth]{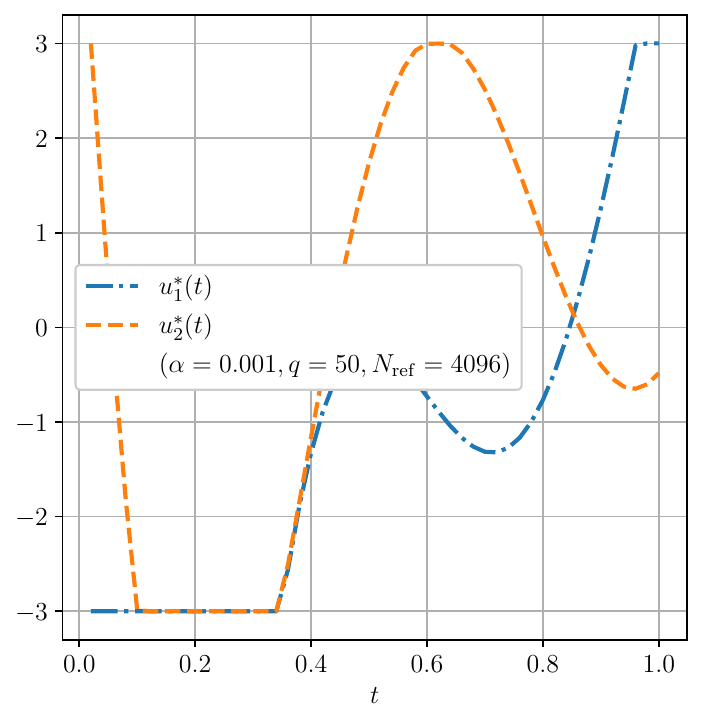}}
	\caption{%
		For the harmonic oscillator problem formulated in
		\cref{sec:harmonicoscillator},
		nominal solution \emph{(left)} and
		reference solution \emph{(right)}.}
	\label{fig:harmonic_oscillator_solutions}
\end{figure}

\begin{figure}[t]
	\centering
	\subfloat{%
		\includegraphics[width=0.495\textwidth]{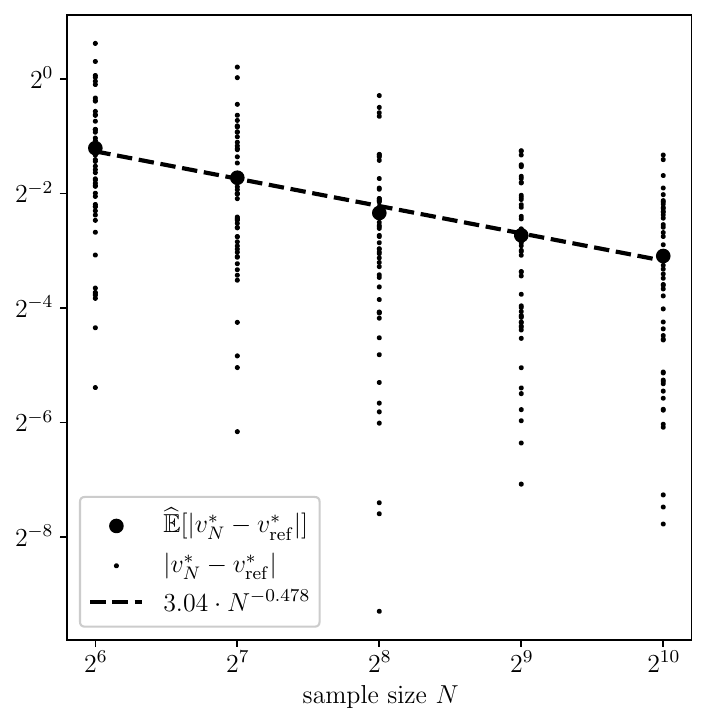}}
	\subfloat{%
		\includegraphics[width=0.495\textwidth]{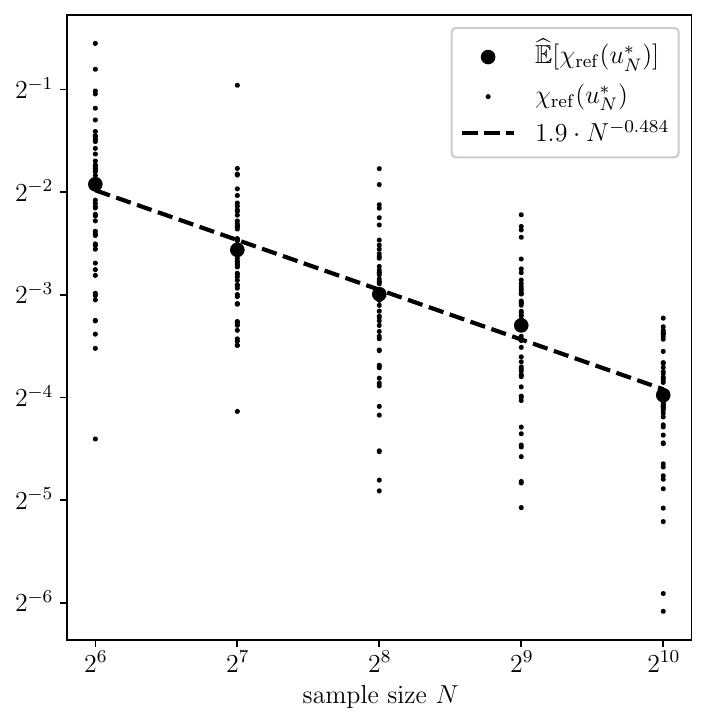}}
	\caption{%
		For the harmonic oscillator problem formulated in
		\cref{sec:harmonicoscillator},  convergence rate of the
		SAA optimal values  \emph{(left)}  and of the reference criticality measure
		evaluated at SAA critical points \emph{(right)}.
		The empirical means $\widehat{\mathbb{E}}$ were computed using $50$ replications. The convergence rates were computed using least squares.}
	\label{fig:harmonic_oscillator_rates}
\end{figure}

We consider a harmonic oscillator
inspired by the problem
formulation in \cite[sect.\ 7.1]{Phelps2016}.
Our optimization problem is given by
\begin{align*}
\min_{u \in L^2(0,1;\mathbb{R}^2)}\,
(1/2)\mathbb{E}[x_1^u(1,\xi)^2 + x_2^u(1,\xi)^2] + \psi(u),
\end{align*}
where for each parameter $\xi \in \Xi \subset \mathbb{R}^5$
and  control $u(\cdot) \in L^2(0,1;\mathbb{R}^2)$,
$x^u(\cdot,\xi) = x(\cdot,\xi)$ solves
\begin{align*}
\dot{x}(t,\xi) =
\begin{bmatrix}
0 & - \xi_1 \\
\xi_1 & 0
\end{bmatrix}
x(t,\xi) +
\begin{bmatrix}
u_1(t) \\ u_2(t)
\end{bmatrix}
+
\begin{bmatrix}
	\xi_2 \\ \xi_3
\end{bmatrix}
, \quad t \in (0,1), \quad x(0,\xi) =
\begin{bmatrix}
1+\xi_4\\ \xi_5
\end{bmatrix}.
\end{align*}
Moreover, the function $\psi(\cdot)$ is the sum of
$(\alpha/2)\|\cdot\|_{L^2(0,1;\mathbb{R}^2)}^2$
with $\alpha = 10^{-3}$,
and the indicator function of
the set $\{u \in L^2(0,1;\mathbb{R}^2) \colon
\|u_i\|_{L^\infty(0,1;\mathbb{R})}\leq 3, \quad i = 1, 2
\}$.
The random variables
$\xi_1, \ldots, \xi_5$
are independent, and uniformly distributed on
$[0, 2\pi]$, $[-5,5]$, $[-5,5]$, $[-1/2,1/2]$,
and $[-1/2,1/2]$, respectively.
The Cartesian product of these intervals
defines the sample space $\Xi$.

\Cref{fig:harmonic_oscillator_solutions} depicts the nominal
and reference solutions. \Cref{fig:harmonic_oscillator_rates} depicts
convergence rates for the SAA optimal values and critical points.
These empirical rates are close to the theoretical rates established in
\cref{sec:ratesvalues,sec:ratescritialponts}.
We have $\text{EPP} = 4.14307$, $\text{RP} = 3.85392$, yielding
$\text{VSS} = 0.28915$ and hence a relative VSS of  $6.98\%$.

\subsection{Vaccination scheduling for epidemic control under model parameter uncertainty}
\label{subsec:vaccination}

\begin{figure}[t]
	\centering
	\includegraphics[width=0.495\textwidth]{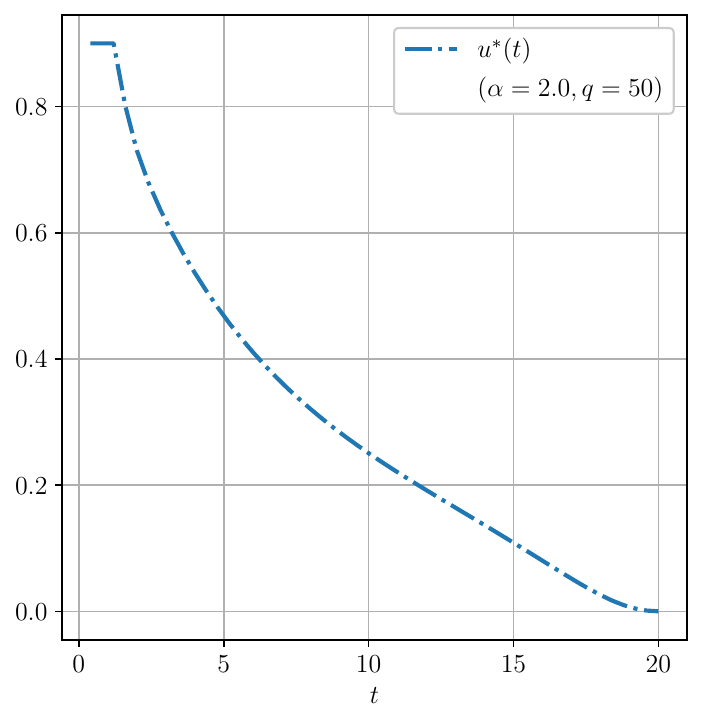}
	\subfloat{%
		\includegraphics[width=0.495\textwidth]{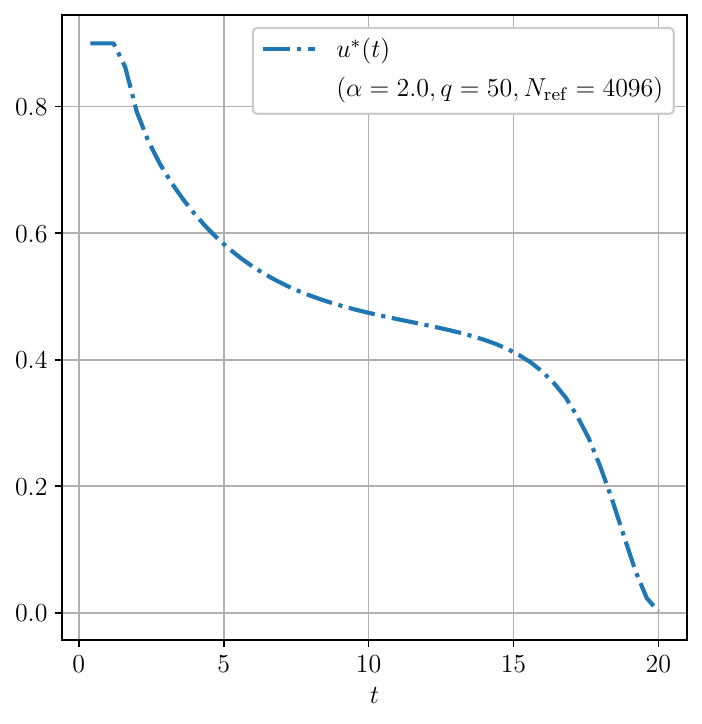}}
	\caption{%
		For the vaccination scheduling problem formulated in
		\cref{subsec:vaccination},
		nominal solution \emph{(left)} and
		reference solution \emph{(right)}.}
	\label{fig:vaccination_solutions}
\end{figure}

We formulate an optimal control problem to obtain vaccination schedules
for an epidemic disease under model parameter uncertainty. This problem
formulation is inspired by the optimal control problem formulated in
\cite[sect.\ 4]{MillerNeilan2010}
(see also \cite[Chap.\ 13]{Lenhart2007}).
The dynamics are based on an SEIR model described in
\cite[sect.\ 4.2]{MillerNeilan2010}, which tracks the number of individuals
who are susceptible (S), exposed (E), infected (I), and recovered/vaccinated (R), as well as the total population.
The model relies on six key parameters: natural death
rate, disease death rate, birth rate, infection incidence, infection rate, and recovery rate.
We account for uncertainty by treating these six parameters as random variables.
Our control variable is the time-dependent vaccination rate for susceptible individuals.
The goal is to minimize the expected number of infected individuals and the cost of vaccination.

We formulate the optimal control problem
\begin{align}
	\label{eq:vaccination}
	\min_{u \in L^2(0, t_f;\mathbb{R})}\,
	(1/10)\mathbb{E}\Big[\int_{0}^{t_f} I^u(t,\xi) \mathrm{d} t\Big] + \psi(u),
\end{align}
where for each  control $u(\cdot) \in L^2(0, t_f;\mathbb{R})$
with $0 \leq u(t) \leq 0.9$ for a.e.\ $t \in (0,t_f)$ and parameter
$\xi \coloneqq (a, b, c, d, e, g) \in \Xi$,
the states
$S^u(\cdot,\xi)$, $E^u(\cdot, \xi)$, $I^u(\cdot,\xi)$, $R^u(\cdot,\xi)$,
$M^u(\cdot, \xi)$
solve the SEIR model (see \cite[eqns.\ (4.1)--(4.5)]{MillerNeilan2010})
\begin{align*}
	\dot{S}(t) & =b M(t)-d S(t)-c S(t) I(t)-u(t) S(t),
	& S(0)  &= S_0, \\
	\dot{E}(t) & =c S(t) I(t)-(e+d) E(t),  &E(0)  &= E_0, \\
	\dot{I}(t) & =e E(t)-(g+a+d) I(t), & I(0)  &= I_0, \\
	\dot{R}(t) & =g I(t)-d R(t)+u(t) S(t), & R(0)  &= R_0,  \\
	\dot{M}(t) & =(b-d) M(t)-a I(t), & M(0)  &= M_0.
\end{align*}

Following \cite[p.\ 76]{MillerNeilan2010}, we choose $\psi(\cdot)$
as the sum of the indicator function of
$\{u \in L^2(0, t_f; \mathbb{R})
\colon 0 \leq u(t) \leq 0.9 \text{ for a.e. } t \in (0,t_f)\}$ and
$(\alpha/2)\|\cdot\|_{L^2(0, t_f; \mathbb{R})}^2$
with $\alpha = 2$.
For the numerical solution, we removed the fourth state equation,
as none of the other states depend on $R(\cdot)$.
The control problem \eqref{eq:vaccination} can be formulated as an instance
of the risk-neutral problem \eqref{eq:optimalcontrolproblem} using a time transformation and incorporating an additional state variable.

We describe the parameter values and our
choice of the random vector $\xi \in \mathbb{R}^6$
used for our numerical simulations.
As in \cite[Table 1]{MillerNeilan2010}, we use the initial states
$S_0 = 1000$,
$E_0 = 100$,
$I_0 = 50$, and
$R_0 = 15$.
Let $M_0 = S_0 + E_0 + I_0 + R_0$,
and let $t_f = 20$.
We choose the nominal parameter
$\bar \xi = (0.2, 0.525, 0.001, 0.5, 0.5, 0.1)$
for $\xi = (a, b, c, d, e, g)$
using the parameter values in \cite[Table 1]{MillerNeilan2010}.
We construct the random variables $\xi_j$, $j = 1, \ldots, 6$, through
random relative
perturbations of the nominal parameter $\bar \xi$. Specifically,
for $\sigma \coloneqq 0.15$ and independent random variables
$\rho_j$, $j = 1, \ldots, 6$, each uniformly distributed on
the interval $[-1,1]$, we define
\begin{align*}
	\xi_j \coloneqq (1+\sigma \rho_j)\bar \xi_j,
	\quad j = 1, \ldots, 6.
\end{align*}
Therefore, $\Xi$ is chosen as the Cartesian product
of the intervals $[(1-\sigma)\bar \xi_j, (1+\sigma)\bar \xi_j]$, $j=1, \ldots, 6$.

\Cref{fig:vaccination_solutions} depicts the nominal
and reference solutions. \Cref{fig:vaccination_rates} depicts
convergence rates for the SAA optimal values and critical points.
These empirical rates closely match the theoretical rates established in
\cref{sec:ratesvalues,sec:ratescritialponts}. Given the potential nonconvexity of the vaccination scheduling problem, we cannot guarantee that the optimal values shown in \Cref{fig:vaccination_rates}, as computed by the optimization solver, correspond to the true optimal values.
We have $\text{EPP} = 31.5168$,
$\text{RP} = 26.6276$, $\text{VSS} = 4.88922$, and
hence a relative VSS of $15.51\%$.

The optimal control problem \eqref{eq:vaccination} may not satisfy
\Cref{assumption:essentiallyboundedtrajectories}. 
Using the general invariance principle stated in  \cite[Thm.\ on p.~119]{Walter1998}
(see also the discussions in \cite{Hethcote2000}), we can determine an invariant set for the SEIR dynamics that is 
independent of the feasible controls and parameters in $\Xi$.
This result can be combined with the
construction proposed in \cite{Scagliotti2025} to obtain modified dynamics and a corresponding optimal control problem
that satisfy \Cref{assumption:essentiallyboundedtrajectories}; see
\cite[eqns.\ (2.6) and (2.19), and Lemma~2.3]{Scagliotti2025}
for further details.

\begin{figure}[t]
	\centering
	\includegraphics[width=0.495\textwidth]{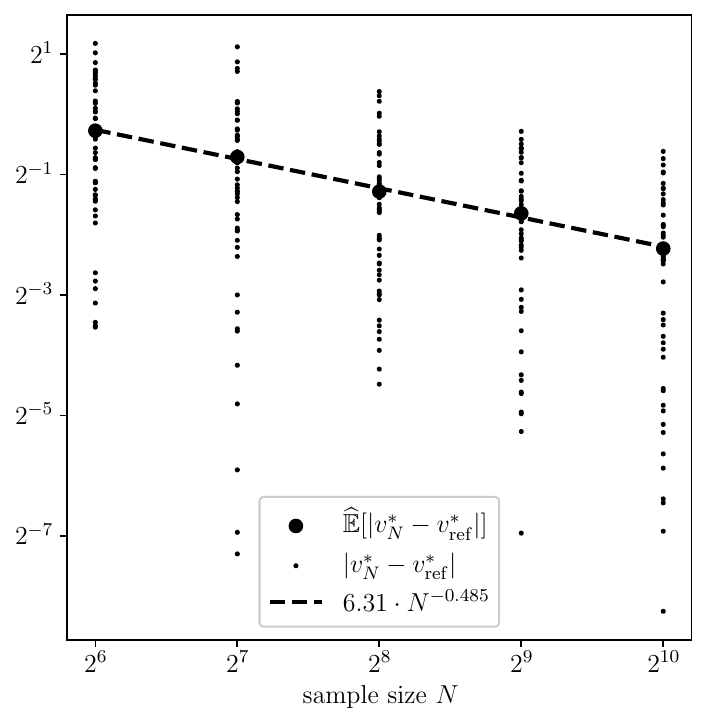}
	\subfloat{%
		\includegraphics[width=0.495\textwidth]{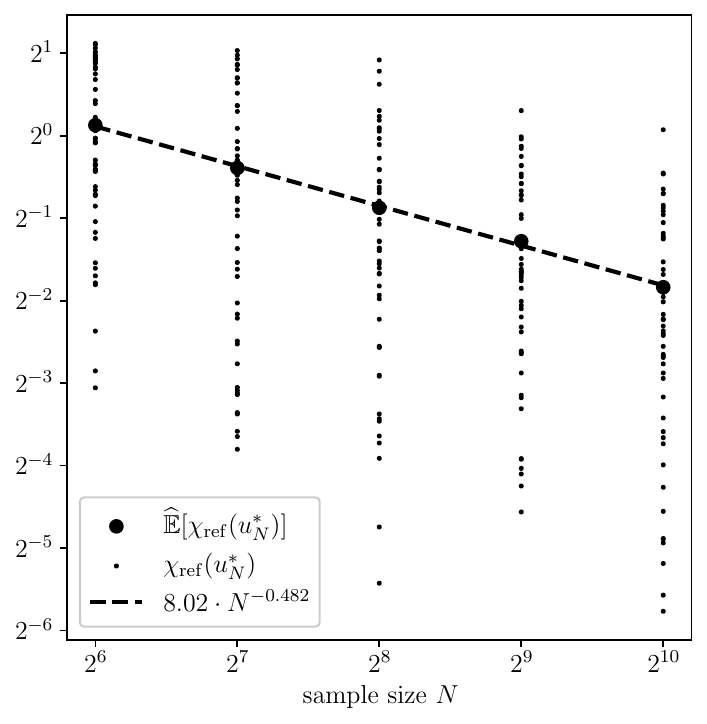}}
	\caption{%
		For the vaccination scheduling problem formulated in
		\cref{subsec:vaccination},  convergence rate of the
		SAA optimal values as computed by the optimization solver  \emph{(left)}  and of the reference criticality measure
		evaluated at SAA critical points \emph{(right)}.
		The empirical means $\widehat{\mathbb{E}}$ were computed using $50$ replications. The convergence rates were computed using least squares.}
	\label{fig:vaccination_rates}
\end{figure}

\section{Discussion}

We have considered risk-neutral optimal control of affine-control dynamics with random inputs. This manuscript has demonstrated nonasymptotic convergence rates for SAA optimal values and SAA critical points, which align with the typical Monte Carlo convergence rate. We have empirically validated these rates through numerical simulations.
An open question remains whether the dependence on the strong convexity parameter in the convergence rates is optimal. This consideration may become particularly relevant for applications of our results to (statistical) inverse problems.

\subsection*{Acknowledgments}
The second author thanks Ashwin Pananjady for insightful discussions on
entropy integral bounds, optimality, and regularization.
The authors thank the two anonymous referees for their comments and suggestions, which have improved the quality of this manuscript.

\subsection*{Reproducibility of computational results}
Computer code that allows the reader to reproduce the computational results in
this manuscript is available at
\url{https://doi.org/10.5281/zenodo.15529367}.

\appendix
\section{Optimality conditions}
\label{sec:optimality-conditions}
We state essentially known first-order necessary optimality
conditions for composite optimization in Hilbert spaces. We use these
optimality conditions to derive first-order optimality conditions
for the control problem under uncertainty and its SAA problem,
and to define criticality measures. These optimality conditions
are used to justify the definition of certain criticality measures.

We consider the composite optimization problem
\begin{align}
\label{eq:compositeproblem}
\min_{u \in \mathrm{dom}(\psi)} \, G(u) + \psi(u).
\end{align}

We formulate assumptions on the optimization problem
\eqref{eq:compositeproblem}.

\begin{assumption}
\label{assumption:compositeproblem}
  \begin{enumerate}[nosep]
  \item The space $U$ is a real Hilbert space.
  \item  The function $\psi \colon U \to (-\infty,\infty]$ is
  proper, lower semicontinuous, and strongly convex
  with parameter $\alpha > 0$.
  \item The set  $U_1 \subset U$
  is convex with $\mathrm{dom}(\psi) \subset U_1$. The function $G\colon U_1 \to \mathbb{R}$
  is Gateaux differentiable on $\mathrm{dom}(\psi)$
  relative to $U_1$.
  \end{enumerate}
\end{assumption}

Let \Cref{assumption:compositeproblem} hold true.
We say that $\bar u \in \mathrm{dom}(\psi)$ is a critical point of
\eqref{eq:compositeproblem} if
$- \nabla G(\bar u) \in \partial \psi(\bar u)$.
Here, $\partial \phi(u)$ denotes the subdifferential
of $\phi \colon U \to (-\infty,\infty]$ at $u \in U$.
Let us define $\psi_\alpha(u) \coloneqq \psi(u) -  (\alpha/2) \|u\|_U^2$.

The following essentially known fact characterizes the
critical points of \eqref{eq:compositeproblem}.

\begin{lemma}
\label{lem:optimalityconditions-compositeproblem}
Let \Cref{assumption:compositeproblem} be satisfied.
If $\bar u \in \mathrm{dom}(\psi)$ is a local solution to \eqref{eq:compositeproblem},
then $- \nabla G(\bar u) \in \partial \psi(\bar u)$.
If $\bar u \in \mathrm{dom}(\psi)$, then
$\bar u $ is a critical point of \eqref{eq:compositeproblem}
if and only if
$$
\bar u
= \mathrm{prox}_{\gamma\psi_\alpha}(\bar u-\gamma\nabla G(\bar u)-\alpha \gamma \bar u)
\quad \text{for any and all} \quad \gamma > 0.
$$
\end{lemma}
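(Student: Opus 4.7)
The plan is to establish the two claims separately, relying on standard convex analysis and the fact that $\psi_\alpha \coloneqq \psi - (\alpha/2)\|\cdot\|_U^2$ is proper, lower semicontinuous, and convex (since $\psi$ is strongly convex with parameter $\alpha$), and that the subdifferential decomposes as $\partial \psi(u) = \partial \psi_\alpha(u) + \alpha u$ for every $u \in \mathrm{dom}(\psi)$. The key technical care is in handling the Gateaux differentiability of $G$ \emph{relative to} $U_1$: admissible directions at $\bar u$ must keep us in $U_1$, but since $\mathrm{dom}(\psi) \subset U_1$ is convex, for any $u \in \mathrm{dom}(\psi)$ the whole segment $\bar u + t(u-\bar u)$ with $t \in [0,1]$ lies in $\mathrm{dom}(\psi)$, hence in $U_1$, so directional derivatives in these directions are available.

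For the first assertion, I would fix a local solution $\bar u \in \mathrm{dom}(\psi)$ and an arbitrary $u \in \mathrm{dom}(\psi)$ and use the convexity of $\psi$ to estimate, for small $t>0$,
\begin{align*}
0 &\leq G(\bar u + t(u-\bar u)) + \psi(\bar u + t(u-\bar u)) - G(\bar u) - \psi(\bar u) \\
&\leq G(\bar u + t(u-\bar u)) - G(\bar u) + t\bigl(\psi(u) - \psi(\bar u)\bigr).
\end{align*}
Dividing by $t$ and letting $t \to 0^+$ produces the subgradient inequality $\psi(u) - \psi(\bar u) \geq (-\nabla G(\bar u), u-\bar u)_U$, which is trivially satisfied for $u \notin \mathrm{dom}(\psi)$; hence $-\nabla G(\bar u) \in \partial \psi(\bar u)$.

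For the equivalence, I would characterize critical points via the prox identity. For any $\gamma > 0$ and $z \in U$, the definition of the proximity operator gives $\bar u = \mathrm{prox}_{\gamma \psi_\alpha}(z)$ if and only if $z - \bar u \in \gamma\partial \psi_\alpha(\bar u)$. Setting $z \coloneqq \bar u - \gamma\nabla G(\bar u) - \alpha\gamma\bar u$ yields $z - \bar u = -\gamma\nabla G(\bar u) - \alpha\gamma\bar u$, so the prox identity is equivalent to
\begin{align*}
-\nabla G(\bar u) - \alpha\bar u \in \partial \psi_\alpha(\bar u),
\end{align*}
which, via the decomposition $\partial \psi(\bar u) = \partial \psi_\alpha(\bar u) + \alpha \bar u$, is exactly $-\nabla G(\bar u) \in \partial \psi(\bar u)$, i.e., the criticality condition. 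In particular, the equivalence holds simultaneously for every $\gamma > 0$.

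I do not expect a genuine obstacle: the only nonroutine point is justifying the subdifferential decomposition and the use of directional derivatives inside $U_1$, both of which follow directly from \Cref{assumption:compositeproblem} and the convexity of $\mathrm{dom}(\psi)$ (which in turn follows from the strong convexity of $\psi$). The result is essentially a restatement of the well-known forward-backward fixed-point characterization of stationarity for composite convex-plus-smooth problems, tailored to the fact that only the \emph{nonstrongly-convex} part $\psi_\alpha$ sits inside the proximity operator while the quadratic term $\alpha\bar u$ is peeled off and combined with $\nabla G(\bar u)$.
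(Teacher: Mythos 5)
Your proposal is correct and follows essentially the same route as the paper: the equivalence is obtained from the prox characterization $\bar u = \mathrm{prox}_{\gamma\psi_\alpha}(z) \iff z - \bar u \in \gamma\partial\psi_\alpha(\bar u)$ together with the decomposition $\partial\psi(\bar u) = \partial\psi_\alpha(\bar u) + \alpha\bar u$, which is exactly the paper's argument via \cite[Prop.\ 12.26]{Bauschke2011}. The only difference is that you spell out the standard derivation of the necessary condition $-\nabla G(\bar u) \in \partial\psi(\bar u)$ (correctly handling the directional derivatives relative to $U_1$), whereas the paper simply cites it as well known.
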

\begin{proof}
The necessary first-order optimality condition is a direct consequence
of well-known statements.
Let us define $G_\alpha(u) \coloneqq G(u) + (\alpha/2) \|u\|_U^2$.
We have
$\partial \psi(u)  =\partial \psi_\alpha(u) + \alpha u$.
Therefore, $\bar u$ is a critical point for \eqref{eq:compositeproblem}
if and only if
$- \nabla G_\alpha(\bar u) \in \partial \psi_\alpha(\bar u)$
if and only if
$- \gamma \nabla G_\alpha(\bar u) \in \partial [\gamma\psi_\alpha(\bar u)]$.
Combined with
\cite[Prop.\ 12.26]{Bauschke2011}, we obtain the
characterizations of critical points of \eqref{eq:compositeproblem}.
\end{proof}

\section{Covering numbers of vector-valued functions}
\label{sec:coveringnumbers}
We provide upper bounds on the covering numbers of
$\overline{\mathbb{B}}_{W_1^{2}(0,1; \mathbb{R}^m)}(0;1)$
with respect to the $L^2(0,1;\mathbb{R}^m)$-norm.
The covering numbers may be nonoptimal
with respect to the space dimension $m$, but are optimal
with respect to the covering radii.
In the main text, we use these covering numbers to establish
those of a deterministic set containing all SAA critical points.

\begin{proposition}
\label{prop:coveringnumbers}
For some $\varrho > 0$
and each $m \in \mathbb{N}$,
the binary logarithm of the $\nu$-covering number of $\overline{\mathbb{B}}_{W_1^{2}(0,1; \mathbb{R}^m)}(0;1)$ with respect to the $L^2(0,1;\mathbb{R}^m)$-norm
does not exceed $\varrho\sqrt{m}(m/\nu)$ for
all sufficiently small $\nu> 0$.
\end{proposition}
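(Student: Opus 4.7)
The plan is to reduce the vector-valued statement $(m \ge 1)$ to the scalar case $m = 1$ via a product-of-nets construction, and to obtain the scalar bound by a spectral reformulation that turns the Sobolev unit ball into a Hilbert ellipsoid with polynomially decaying semiaxes. The quantitative input that drives everything is the sharp scalar metric entropy estimate
$\log_2 \mathcal{N}(\overline{\mathbb{B}}_{W_1^2(0,1;\mathbb{R})}(0;1), L^2(0,1;\mathbb{R}), \varepsilon) \le C_0/\varepsilon$
for all sufficiently small $\varepsilon > 0$, with $C_0$ an absolute constant.

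For the scalar case I would diagonalize the $W_1^2(0,1;\mathbb{R})$ inner product against the $L^2(0,1;\mathbb{R})$ inner product using the orthonormal basis $\{\phi_k\}_{k\ge 0}$ of eigenfunctions of $-\partial_t^2 + I$ on $(0,1)$ with Neumann boundary conditions (eigenvalues $\lambda_k = 1 + (k\pi)^2$). The map $v \mapsto ((v,\phi_k)_{L^2(0,1)})_k$ carries $\overline{\mathbb{B}}_{W_1^2(0,1;\mathbb{R})}(0;1)$ isometrically onto the ellipsoid $\mathcal{E} \coloneqq \{\, c \in \ell^2 \colon \sum_k \lambda_k c_k^2 \le 1 \,\}$ with the $L^2(0,1)$-distance pulled back to the $\ell^2$-distance, and the semiaxes $a_k = 1/\sqrt{\lambda_k}$ decay like $1/k$. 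The target estimate then follows from the classical metric entropy bound for such Hilbert ellipsoids (see, e.g., Birman--Solomyak or Carl--Stephani). Concretely, one truncates to the leading $K \sim 1/\varepsilon$ Fourier modes, for which $\sum_{k > K} c_k^2 \le \lambda_{K+1}^{-1} \lesssim \varepsilon^2$, and then controls the $\varepsilon/2$-covering number of the finite-dimensional ellipsoid $\mathcal{E}_K \subset \mathbb{R}^K$ by a volumetric comparison, using Stirling's formula to show $\mathrm{vol}(\mathcal{E}_K)/\mathrm{vol}((\varepsilon/2) B_2^K) \lesssim e^K$.

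For the vector-valued case I would use a coordinate-wise product of nets. Writing $v = (v_1, \ldots, v_m)$, the Hilbert space identity $\|v\|_{W_1^2(0,1;\mathbb{R}^m)}^2 = \sum_{i=1}^m \|v_i\|_{W_1^2(0,1;\mathbb{R})}^2$ forces $\|v_i\|_{W_1^2} \le 1$ for each coordinate. Setting $\varepsilon \coloneqq \nu/\sqrt{m}$ and applying the scalar bound to each coordinate produces finite $\varepsilon$-nets $S_i \subset \overline{\mathbb{B}}_{W_1^2(0,1;\mathbb{R})}(0;1)$ of cardinality at most $2^{C_0 \sqrt{m}/\nu}$. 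If $w_i \in S_i$ satisfies $\|v_i - w_i\|_{L^2(0,1)} \le \varepsilon$, then
$\|v - (w_1, \ldots, w_m)\|_{L^2(0,1;\mathbb{R}^m)}^2 = \sum_{i=1}^m \|v_i - w_i\|_{L^2(0,1)}^2 \le m\varepsilon^2 = \nu^2$,
so $S_1 \times \cdots \times S_m$ is a $\nu$-net for $\overline{\mathbb{B}}_{W_1^2(0,1;\mathbb{R}^m)}(0;1)$ in $L^2(0,1;\mathbb{R}^m)$ of cardinality at most $2^{C_0 m^{3/2}/\nu}$, giving the claim with $\varrho = C_0$. The phrase ``sufficiently small $\nu$'' becomes $\nu \le \sqrt{m}\,\varepsilon_0$, where $\varepsilon_0$ is the scalar threshold.

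The hard part is the sharp scalar estimate $\log_2 \mathcal{N}(\varepsilon) \lesssim 1/\varepsilon$ \emph{without} a logarithmic factor. A naive coordinate-wise lattice quantization of either the Fourier coefficients or the piecewise-constant values on $\sim 1/\varepsilon$ subintervals yields only $\log_2 \mathcal{N}(\varepsilon) \lesssim (1/\varepsilon)\log(1/\varepsilon)$, which would leave a spurious logarithm in the final vector bound and break the claim (since no constant $\varrho$ can absorb $\log(1/\nu)$ for arbitrarily small $\nu$). Removing that logarithm requires genuinely exploiting the ellipsoidal shape via the volumetric comparison rather than covering a circumscribed box or cube; this is the only nonroutine ingredient, and the rest of the argument is bookkeeping.
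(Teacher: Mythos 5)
Your vector-valued reduction is exactly the paper's: set $\varepsilon = \nu/\sqrt{m}$, cover each scalar coordinate, and take the product of the coordinate nets, giving cardinality at most $2^{C_0 m^{3/2}/\nu}$. The only substantive difference is how the scalar input is obtained: the paper simply cites Theorem~1.7 of Birman and Solomyak for $\log_2 \mathcal{N}(\overline{\mathbb{B}}_{W_1^2(0,1)}(0;1), L^2(0,1), \varepsilon) \lesssim 1/\varepsilon$, whereas you reprove it via the Neumann-eigenfunction diagonalization, truncation to $K \sim 1/\varepsilon$ modes, and a volumetric count on the resulting finite-dimensional ellipsoid. That sketch is sound (the ratio $\prod_{k \le K}(1 + 2a_k/\varepsilon)$ with semiaxes $a_k \asymp 1/k$ has logarithm $O(K)$ by Stirling, which is precisely where the logarithmic factor is avoided), and your diagnosis that a naive lattice quantization would leave a spurious $\log(1/\varepsilon)$ is exactly the reason the paper leans on the sharp ellipsoid estimate; so your route buys self-containedness at the cost of length, while the paper's buys brevity at the cost of a black box. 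One bookkeeping point you skip and the paper does not: the product net $S_1 \times \cdots \times S_m$ lies in the $W_1^2$-ball of radius $\sqrt{m}$, not the unit ball, so under the paper's convention $\mathcal{N}(Y_0,\nu) = \mathcal{N}(Y_0, Y_0;\nu)$ (net points required to lie in $Y_0$, which is how the proposition is used when covering $\mathcal{C}$ downstream) you still need the standard external-to-internal conversion --- the paper invokes Lemma~8.2-2(c) of Kreyszig, i.e.\ $\mathcal{N}(Y_0;\nu) \le \mathcal{N}(Y_0, Y;\nu/2)$ --- which doubles the radius and is absorbed into $\varrho$. This is a routine fix, not a gap in the substance of your argument.
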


\begin{proof}
According to Theorem~1.7 in \cite{Birman1980},
the binary logarithm of the $\nu$-covering number of  $\overline{\mathbb{B}}_{W_1^{2}(0,1)}(0;1)$ with respect to the $L^2(0,1)$-norm
is proportional to $(1/\nu)$ for all sufficiently small $\nu> 0$.
In particular, there exists $\varrho > 0$
such that for each $\nu > 0$,
this $\nu$-covering number does not exceed
$2^{(\varrho/2) (1/\nu)}$.
Fix $\nu > 0$. Let $y_1, \ldots, y_K$ be a $(\nu/\sqrt{m})$-cover of $\overline{\mathbb{B}}_{W_1^{2}(0,1)}(0;1)$ with respect to the $L^2(0,1)$-norm.
We have $K \leq 2^{(\varrho/2) (\sqrt{m}/\nu)}$.
We consider
$\mathcal{W}_K \coloneqq \{w \in L^2(0,1;\mathbb{R}^m) \ \colon w_i \in \{y_1, \ldots, y_K\},
\quad i = 1, \ldots, m\}$.
Fix $v \in \overline{\mathbb{B}}_{W_1^{2}(0,1; \mathbb{R}^m)}(0;1)$.
Hence $v_i \in \overline{\mathbb{B}}_{W_1^{2}(0,1)}(0;1)$, $i = 1, \ldots, m$.
We can choose $w \in \mathcal{W}_K$
such that  for each $i \in \{1, \ldots, m\}$,
$ \|v_i - w_i\|_{L^2(0,1)} \leq \nu/\sqrt{m}$.
Hence
\begin{align*}
\|v- w\|_{L^2(0,1;\mathbb{R}^m)}^2
= \sum_{i=1}^m\|v_i - w_i\|_{L^2(0,1)}^2
\leq \nu^2.
\end{align*}
The number of elements of $\mathcal{W}_K$
does not exceed $K^m  \leq 2^{(\varrho/2)  m(\sqrt{m}/\nu)}$.

Now, let $Y_0 =\overline{\mathbb{B}}_{W_1^{2}(0,1; \mathbb{R}^m)}(0;1)$
and $Y = L^2(0,1;\mathbb{R}^m)$.
From the proof of Lemma~\mbox{8.2-2}~(c) in \cite{Kreyszig1978}, we deduce
$\mathcal{N}(Y_0; \nu) \leq \mathcal{N}(Y, Y_0; \nu/2)$
for all $\nu  > 0$. Combining the pieces ensures the assertion.
\end{proof}

\section{Uniform expectation bounds in Hilbert spaces}
\label{section:uniformexpectation}
We establish a uniform expectation bound
for sample averages
of independent Hilbert space-valued random variables
using a chaining argument
(see, for example, \cite{Gine2016,Talagrand2021})
and an exponential moment bound
\cite{Pinelis1986}.
The uniform expectation bound allows us to establish
convergence rates for SAA optimal values and SAA critical points,
as demonstrated in the main text.

Throughout the section,
let $(\Theta, \mathcal{A}, \mathbb{P})$ be a complete
probability space, and let $\xi$ be a $\Xi$-valued random element,
where $\Xi$ is a complete separable metric space.

\begin{theorem}
\label{prop:uniformexponentialtailboundaverage}
Let $H$ be a real, separable Hilbert space,
let $C \subset H$ be a nonempty, closed set
with diameter $D > 0$,
and let $G \colon C \times \Xi \to H$ be a Carath\'eodory mapping
with
$\mathbb{E}[\|G(x,\xi)\|_H]  < \infty$ and
$\mathbb{E}[G(x,\xi)] = 0$
for each $x \in C$.
Suppose  that there exists a constant $M > 0$ such that
\begin{align}
\label{eq:mappingsubgaussian}
\mathbb{E}[\cosh(\lambda \|G(x,\xi) - G(y,\xi)\|_H)] \leq
\mathrm{e}^{(1/2)M^2 \lambda^2 \|x-y\|_H^2}
\; \text{for all} \;x, y \in C,
\; \lambda \geq 0.
\end{align}
Let $\xi^1, \xi^2, \ldots$ be independent
$\Xi$-valued random elements defined
on $(\Theta, \mathcal{A}, \mathbb{P})$, each having the same
distribution as $\xi$.
We define
$\widehat{G}_N(x)
\coloneqq (1/N)\sum_{i=1}^N
G(x,\xi^i)$.
Then for each $x_0 \in C$ and $N \in \mathbb{N}$,
\begin{align*}
\mathbb{E}[\sup_{x \in C}\,
\|\widehat{G}_N(x)\|_H]
\leq \mathbb{E}[\|\widehat{G}_N(x_0)\|_H]
+  \frac{4 \sqrt{3} M }{\sqrt{N}} \int_{0}^{D/2}
\sqrt{\ln(2\mathcal{N}(C, \varepsilon))}
\, \mathrm{d}\varepsilon.
\end{align*}
\end{theorem}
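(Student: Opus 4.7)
The plan is to establish \eqref{eq:metricentropybound} by a Dudley-style chaining argument applied to the centered Hilbert-valued sample-average process $x\mapsto\widehat{G}_N(x)$ on $C$. By the triangle inequality,
\[\mathbb{E}\big[\sup_{x\in C}\|\widehat{G}_N(x)\|_H\big]
\le \mathbb{E}[\|\widehat{G}_N(x_0)\|_H]
+ \mathbb{E}\big[\sup_{x\in C}\|\widehat{G}_N(x)-\widehat{G}_N(x_0)\|_H\big],\]
so it suffices to control the second expectation by the entropy integral. To that end, I first establish sub-Gaussianity of the increments of $\widehat{G}_N$: combining the hypothesis \eqref{eq:mappingsubgaussian}, independence of $\xi^1,\ldots,\xi^N$, and a Pinelis-type exponential-moment inequality for sums of mean-zero independent Hilbert-valued random variables (cf.\ \cite{Pinelis1986}), I obtain
\[\mathbb{E}[\cosh(\lambda\|\widehat{G}_N(x)-\widehat{G}_N(y)\|_H)]
\le \exp\!\Big(\frac{\lambda^2 M^2\|x-y\|_H^2}{2N}\Big)
\quad\text{for all }x,y\in C,\ \lambda\ge 0.\]
A standard Chernoff/MGF argument on $e^{\lambda\cdot}\le 2\cosh(\lambda\cdot)$ then yields a finite-max lemma: for any finite collection $\{(y_k,z_k)\}_{k=1}^K\subset C\times C$ with $\|y_k-z_k\|_H\le\rho$,
\[\mathbb{E}\big[\max_{1\le k\le K}\|\widehat{G}_N(y_k)-\widehat{G}_N(z_k)\|_H\big]
\le c\,\frac{M\rho}{\sqrt{N}}\sqrt{\ln(2K)}\]
for an explicit absolute constant $c$.

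Next I set up the chain. Let $\varepsilon_j:=D\cdot 2^{-j}$ for $j\ge 0$, and set $N_0:=\{x_0\}$, which is a $D$-net of $C$ since $C$ has diameter $D$. For $j\ge 1$, choose $N_j\subset H$ to be a minimal $\varepsilon_j$-net of $C$, so $|N_j|=\mathcal{N}(C,\varepsilon_j)$, and let $\pi_j\colon C\to N_j$ be a nearest-point projection; then $\pi_0\equiv x_0$ and $\|x-\pi_j(x)\|_H\le\varepsilon_j$. For each $J\in\mathbb{N}$,
\[\widehat{G}_N(x)-\widehat{G}_N(x_0)
= [\widehat{G}_N(x)-\widehat{G}_N(\pi_J(x))]
+ \sum_{j=0}^{J-1}[\widehat{G}_N(\pi_{j+1}(x))-\widehat{G}_N(\pi_j(x))].\]
At level $j$, the pair $(\pi_j(x),\pi_{j+1}(x))$ varies in a set of cardinality at most $|N_j|\,|N_{j+1}|\le \mathcal{N}(C,\varepsilon_{j+1})^2$ and satisfies $\|\pi_{j+1}(x)-\pi_j(x)\|_H\le\varepsilon_j+\varepsilon_{j+1}=(3/2)\varepsilon_j$ by going through $x$; applying the finite-max lemma at each level bounds $\mathbb{E}\sup_{x\in C}\|\widehat{G}_N(\pi_{j+1}(x))-\widehat{G}_N(\pi_j(x))\|_H$ by a constant times $(M\varepsilon_j/\sqrt{N})\sqrt{\ln(2\mathcal{N}(C,\varepsilon_{j+1}))}$. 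The residual term vanishes as $J\to\infty$: the sub-Gaussian estimate above with $y=\pi_J(x)$ gives $\mathbb{E}\|\widehat{G}_N(x)-\widehat{G}_N(\pi_J(x))\|_H\le cM\varepsilon_J/\sqrt{N}$, and an analogous finite-max argument at scale $J$ (exploiting separability of $H$) lets me dispose of the outer supremum.

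To convert the geometric sum into the Dudley integral, I use that $\varepsilon\mapsto\sqrt{\ln(2\mathcal{N}(C,\varepsilon))}$ is non-increasing and $\varepsilon_{j+1}-\varepsilon_{j+2}=\varepsilon_{j+1}/2$, so
\[\varepsilon_{j+1}\sqrt{\ln(2\mathcal{N}(C,\varepsilon_{j+1}))}
\le 2\int_{\varepsilon_{j+2}}^{\varepsilon_{j+1}}\sqrt{\ln(2\mathcal{N}(C,\varepsilon))}\,\mathrm{d}\varepsilon.\]
Summing over $j\ge 0$ telescopes to $2\int_0^{\varepsilon_1}\sqrt{\ln(2\mathcal{N}(C,\varepsilon))}\,\mathrm{d}\varepsilon = 2\int_0^{D/2}\sqrt{\ln(2\mathcal{N}(C,\varepsilon))}\,\mathrm{d}\varepsilon$, delivering the integral appearing in \eqref{eq:metricentropybound}.

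The main technical obstacle is constant bookkeeping: landing precisely on the factor $4\sqrt{3}$ requires reconciling the exact form of the Pinelis-type exponential moment inequality, the constant inside the finite-max lemma, the choice of dyadic base, and the efficient absorption of $\ln(2|N_j|\,|N_{j+1}|)$ into $\ln(2\mathcal{N}(C,\varepsilon_{j+1}))$. A secondary subtlety is the measurability of $\sup_{x\in C}\|\widehat{G}_N(x)\|_H$, which I handle by exploiting separability of $H$ together with the $L^2$-continuity of $x\mapsto\widehat{G}_N(x)$ implied by \eqref{eq:mappingsubgaussian}, so that the supremum over $C$ may be realized over a countable dense subset.
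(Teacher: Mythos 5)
Your overall strategy---peel off $\mathbb{E}[\|\widehat G_N(x_0)\|_H]$ by the triangle inequality, upgrade the pointwise hypothesis \eqref{eq:mappingsubgaussian} to a sub-Gaussian increment bound for $\widehat G_N$ via a Pinelis-type exponential moment inequality, and then run a dyadic chaining argument converted to an entropy integral---is exactly the paper's proof, which packages the chain as \Cref{thm:chaininghilbertspace} and the moment inequality as \Cref{prop:saa:2020-11-21T20:25:01.71}, and then simply applies the former with the rescaled metric $d_{Y_1}(x,y)=\sqrt{3/2}\,(M/\sqrt N)\|x-y\|_H$. The residual term, the monotonicity trick converting the geometric sum into $\int_0^{D/2}$, and the measurability argument via a countable dense union of nets all match the paper.

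There are, however, two concrete constant-level problems that prevent your argument, as written, from reaching the stated factor $4\sqrt3$, and you only gesture at them under ``constant bookkeeping.'' First, the increment bound you assert, $\mathbb{E}[\cosh(\lambda\|\widehat G_N(x)-\widehat G_N(y)\|_H)]\le\exp(\lambda^2M^2\|x-y\|_H^2/(2N))$, is stronger than what the Pinelis inequality for Hilbert-space-valued sums delivers: \Cref{prop:saa:2020-11-21T20:25:01.71} gives $3\tau^2N/4$ in the exponent, not $\tau^2N/2$, so the correct exponent is $3\lambda^2M^2\|x-y\|_H^2/(4N)$. This factor $3/2$ cannot be discarded---it is precisely the origin of the $\sqrt3$ in $4\sqrt3=4\sqrt2\cdot\sqrt{3/2}$. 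Second, your chain maximizes over pairs $(\pi_j(x),\pi_{j+1}(x))\in N_j\times N_{j+1}$, with link length $(3/2)\varepsilon_j$ and cardinality at most $\mathcal N(C,\varepsilon_{j+1})^2$; fed into the finite-max lemma this costs an extra factor of roughly $3/\sqrt2$ per level relative to the paper's chain, which defines the links recursively ($x_{k-1}\coloneqq\pi_{k-1}(x_k)$ with $x_k$ ranging over the $k$-th net) so that each level is a maximum over only $\mathcal N(C,\varepsilon_k)$ increments at distance at most $\varepsilon_{k-1}$. Tracking your scheme to the end yields a leading constant of about $12\sqrt{3/2}\approx14.7$ (and about $12$ even under your too-optimistic increment bound), versus $4\sqrt3\approx6.93$. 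To prove the theorem as stated you must (i) accept the $3/4$ Pinelis factor and (ii) tighten the chain to the one-net-per-level form; with those two repairs your argument coincides with the paper's.
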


Under the hypotheses of \Cref{prop:uniformexponentialtailboundaverage},
we have (cf.\ \cite[p.\ 79]{Yurinsky1995})
\begin{align*}
 \mathbb{E}[\|\widehat{G}_N(x_0)\|_H]
\leq
\sqrt{\frac{ \mathbb{E}[\|G(x_0,\xi)\|_H^2]}{N}}.
\end{align*}
We leverage this bound in the main text.

Before we prepare our proof
of \Cref{prop:uniformexponentialtailboundaverage},
we comment on the condition \eqref{eq:mappingsubgaussian}.

\begin{remark}
\label{rem:uniformexponentialtailboundaverage}
\begin{enumerate}[nosep]
\item For a mean-zero random variable $Z \colon \Xi \to \mathbb{R}$,
the condition,
$\mathbb{E}[\cosh(\lambda |Z|)] \leq \exp(\lambda^2 \tau^2/2)$
for all $\lambda \in \mathbb{R}$ and some $\tau > 0$, is
equivalent
to $Z$ being sub-Gaussian (cf.\ \cite[Lem.\ B.2]{Milz2022b}).
This provides one motivation for using the condition
\eqref{eq:mappingsubgaussian} to model sub-Gaussian behavior.
\item
If $G(\cdot,\xi)$ is Lipschitz continuous with Lipschitz constant
$M$ for each $\xi \in \Xi$,
then \eqref{eq:mappingsubgaussian} holds true.
This is a consequence of the inequality
$\cosh(x) \leq \exp(x^2/2)$ valid for all $x \in \mathbb{R}$.
\end{enumerate}
\end{remark}

We prepare our proof of
\Cref{prop:uniformexponentialtailboundaverage}.
The following lemma provides a basic upper bound on the
expected pointwise maximum of sub-Gaussian random variables.

\begin{lemma}[{see \cite[Lem.\ B.5]{Milz2022b}}]
\label{lem:meanmax}
Let $\sigma > 0$.
If $Z_k : \Theta \to \mathbb{R}$
are random variables
with $\mathbb{E}[\cosh(\lambda |Z_k|)] \leq \exp(\lambda^2\sigma^2/2)$
for all $\lambda \in \mathbb{R}$ and
$k = 1, 2, \ldots, K$, then
$
\mathbb{E}[\max_{1\leq k \leq K} |Z_k|]
\leq \sigma \sqrt{2\ln(2K)}
$.
\end{lemma}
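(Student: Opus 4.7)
The plan is to run the classical moment-generating-function/union-bound argument (a standard step in the chaining method) and optimize the free parameter. Since the hypothesis is phrased in terms of $\cosh$ rather than the more usual sub-Gaussian MGF bound, the first task is to convert it into an exponential bound on $\exp(\lambda|Z_k|)$.

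\textbf{Step 1: Convert the $\cosh$ bound to an exponential bound.}
For each $x \in \mathbb{R}$ one has $\mathrm{e}^x \leq \mathrm{e}^x + \mathrm{e}^{-x} = 2\cosh(x)$. Applying this with $x = \lambda|Z_k|$ and taking expectations gives, for all $\lambda \geq 0$ and each $k$,
\begin{align*}
\mathbb{E}[\exp(\lambda|Z_k|)] \leq 2\mathbb{E}[\cosh(\lambda|Z_k|)] \leq 2\exp(\lambda^2\sigma^2/2).
\end{align*}

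\textbf{Step 2: Jensen's inequality combined with a union bound on the maximum.}
For any $\lambda > 0$, convexity of $x \mapsto \mathrm{e}^{\lambda x}$ and Jensen's inequality yield
\begin{align*}
\exp\bigl(\lambda \mathbb{E}[\max_{1\leq k \leq K}|Z_k|]\bigr)
&\leq \mathbb{E}\bigl[\exp(\lambda \max_{1\leq k \leq K}|Z_k|)\bigr]
= \mathbb{E}\bigl[\max_{1\leq k \leq K}\exp(\lambda|Z_k|)\bigr]
\\
&\leq \sum_{k=1}^K \mathbb{E}[\exp(\lambda|Z_k|)]
\leq 2K\exp(\lambda^2\sigma^2/2).
\end{align*}

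\textbf{Step 3: Take logarithms and optimize $\lambda$.}
Dividing by $\lambda$ after taking logarithms gives
\begin{align*}
\mathbb{E}[\max_{1\leq k \leq K}|Z_k|]
\leq \frac{\ln(2K)}{\lambda} + \frac{\lambda\sigma^2}{2}
\quad \text{for all} \quad \lambda > 0.
\end{align*}
The right-hand side is minimized at $\lambda^* = \sigma^{-1}\sqrt{2\ln(2K)}$, and a direct substitution produces the bound $\sigma\sqrt{2\ln(2K)}$. There is no real obstacle here; the only subtlety is remembering to invoke $\mathrm{e}^x \leq 2\cosh(x)$ rather than $|x| \leq \cosh(x)$-type bounds, because the given hypothesis controls the symmetric quantity $\cosh(\lambda|Z_k|)$ only, not the one-sided $\mathrm{e}^{\lambda Z_k}$. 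Since $|Z_k| \geq 0$, this step is clean.
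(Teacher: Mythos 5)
Your proof is correct and is exactly the standard argument behind this lemma: the paper itself does not reprove it but cites \cite[Lem.\ B.5]{Milz2022b}, where the same Jensen--union-bound--optimization route is used, with the factor $2K$ arising precisely from the inequality $\mathrm{e}^{x}\leq 2\cosh(x)$ as in your Step 1. Nothing further is needed.
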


The following proposition is inspired by entropy integral bounds
for sub-Gaussian processes (see, for example,
\cite{Buldygin2000,Gine2016}).

\begin{proposition}
\label{thm:chaininghilbertspace}
Let $(Y_1, d_{Y_1})$ be a separable metric space,
let $C \subset Y_1$ be a nonempty, closed set
with diameter $D_1 \in (0,\infty]$,
let $Y_2$ be a separable Banach space,
and let $G \colon C \times \Xi \to Y_2$
be a Carath\'eodory mapping.
Suppose that
\begin{align}
\label{eq:mappingsubgaussian"}
\mathbb{E}[\cosh(\lambda \|G(x,\xi) - G(y,\xi) \|_{Y_2})] \leq
\exp(\lambda^2 d_{Y_1}(x,y)^2/2)
\; \text{for all} \;x, y \in C,
\; \lambda \geq 0.
\end{align}
Then for all  points $x_0\in C$, we have
\begin{align}
\label{eq:dudley-bound}
\mathbb{E}[\sup_{x \in C}\, \|G(x,\xi)\|_{Y_2}]
\leq \mathbb{E}[ \|G(x_0,\xi)\|_{Y_2}]
+ 4 \sqrt{2} \int_{0}^{D_1/2}
\sqrt{\ln(2\mathcal{N}(C, \varepsilon))}
\, \mathrm{d}\varepsilon.
\end{align}
\end{proposition}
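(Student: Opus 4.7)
The plan is to carry out a Dudley-type chaining argument, with \Cref{lem:meanmax} serving as the per-scale atomic estimate. As a preliminary step, since $G$ is a Carath\'eodory mapping, $G(\cdot,\xi)$ is continuous for each $\xi$; combined with the separability of $Y_1$, this yields $\sup_{x\in C}\|G(x,\xi)\|_{Y_2} = \sup_{x\in C_0}\|G(x,\xi)\|_{Y_2}$ for any countable dense subset $C_0\subset C$, which makes the left-hand side measurable and reduces the argument to a countable index set. Differentiating \eqref{eq:mappingsubgaussian"} twice at $\lambda=0$ also yields the second-moment bound $\mathbb{E}\|G(x,\xi)-G(y,\xi)\|_{Y_2}^2 \leq d_{Y_1}(x,y)^2$, which will control residual terms.

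Set $\varepsilon_k := D_1/2^k$ for $k\geq 0$. Take $N_0 := \{x_0\}$ (a valid $\varepsilon_0$-net since $\mathrm{diam}(C) = D_1$) and, for $k\geq 1$, a minimal $\varepsilon_k$-net $N_k\subset C$ with $|N_k|\leq \mathcal{N}(C,\varepsilon_k)$. Let $\pi_k\colon C\to N_k$ be a nearest-point map. The crucial device is to compose these maps: fix a large $K$, set $\tilde\pi_K:=\pi_K$, and recursively $\tilde\pi_{k-1}:=\pi_{k-1}\circ\tilde\pi_k$ for $k=K,K-1,\ldots,1$. Then $\tilde\pi_k(x)\in N_k$, $\tilde\pi_0(x)=x_0$, and $d_{Y_1}(\tilde\pi_k(x),\tilde\pi_{k-1}(x))\leq \varepsilon_{k-1}=2\varepsilon_k$. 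Crucially, the pair $(\tilde\pi_k(x),\tilde\pi_{k-1}(x))$ depends on $x$ only through $\tilde\pi_k(x)\in N_k$, so it takes at most $|N_k|$ distinct values as $x$ ranges over $C_0$.

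Telescoping gives, for each $x\in C_0$,
\begin{align*}
G(\pi_K(x),\xi) - G(x_0,\xi) = \sum_{k=1}^{K}\bigl[G(\tilde\pi_k(x),\xi)-G(\tilde\pi_{k-1}(x),\xi)\bigr].
\end{align*}
Since each link is an $\varepsilon_{k-1}$-sub-Gaussian quantity taking at most $|N_k|$ distinct values, \Cref{lem:meanmax} yields
\begin{align*}
\mathbb{E}\Bigl[\sup_{x\in C_0}\|G(\tilde\pi_k(x),\xi)-G(\tilde\pi_{k-1}(x),\xi)\|_{Y_2}\Bigr] \leq \varepsilon_{k-1}\sqrt{2\ln(2\mathcal{N}(C,\varepsilon_k))} = 2\sqrt{2}\,\varepsilon_k\sqrt{\ln(2\mathcal{N}(C,\varepsilon_k))}.
\end{align*}
Monotonicity of $\mathcal{N}(C,\cdot)$ then gives $\varepsilon_k\sqrt{\ln(2\mathcal{N}(C,\varepsilon_k))} \leq 2\int_{\varepsilon_{k+1}}^{\varepsilon_k}\sqrt{\ln(2\mathcal{N}(C,\varepsilon))}\,d\varepsilon$, so summing in $k$ produces a bound of $4\sqrt{2}\int_0^{D_1/2}\sqrt{\ln(2\mathcal{N}(C,\varepsilon))}\,d\varepsilon$.

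To finish, I would let $K\to\infty$: the residual $\sup_{x\in C_0}\|G(x,\xi)-G(\pi_K(x),\xi)\|_{Y_2}$ is controlled, in expectation, by a parallel chaining on the tail scales (or, when the entropy integral is finite, by the same machinery applied below $\varepsilon_K$) and vanishes as $K\to\infty$. Combining with the pointwise bound $\|G(x,\xi)\|_{Y_2} \leq \|G(x_0,\xi)\|_{Y_2} + \|G(x,\xi)-G(x_0,\xi)\|_{Y_2}$ and taking expectations yields \eqref{eq:dudley-bound}. The main obstacle is pinning down the sharp constant $4\sqrt{2}$: it hinges on the composition device (producing $\ln(2|N_k|)$ rather than $\ln(2|N_k|\cdot|N_{k-1}|)$ inside the logarithm) and the improved step bound $d_{Y_1}(\tilde\pi_k(x),\tilde\pi_{k-1}(x))\leq\varepsilon_{k-1}$ (rather than the looser $\varepsilon_{k-1}+\varepsilon_k$), while the countable reduction, dyadic-to-integral conversion, and tail control are routine.
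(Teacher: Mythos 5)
Your proposal is correct and follows essentially the same chaining argument as the paper's proof: dyadic scales $\varepsilon_k = D_1/2^k$ anchored at $x_0$, composed nearest-point projections so that each scale-$k$ link ranges over at most $\mathcal{N}(C,\varepsilon_k)$ pairs at distance at most $\varepsilon_{k-1}$, the sub-Gaussian maximum bound of \Cref{lem:meanmax} applied per scale, and the dyadic-to-integral comparison producing the constant $4\sqrt{2}$. The only cosmetic difference is that the paper runs each chain all the way up to $x$ itself for $x$ in the (countable, dense) union of the nets and then concludes by continuity of $G(\cdot,\xi)$, rather than truncating at $\pi_K(x)$ and sending $K\to\infty$ with a separate residual estimate.
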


\begin{proof}
We adapt the proof of Theorem~3.1 in \cite[p.\ 95]{Buldygin2000}.
If the right-hand side of \eqref{eq:dudley-bound} is
infinite, then there is nothing to show.
Next, let the right-hand side of \eqref{eq:dudley-bound} be finite.
Hence
$C$ is totally bounded and $D_1$ is finite.
We define $\varepsilon_0 \coloneqq D_1$
and $\varepsilon_k \coloneqq (1/2)^k \varepsilon_0$.
For each $k \in \mathbb{N}$, let $C_k$
be a $\varepsilon_k$-net in the set $C$
with $\mathcal{N}(C,\varepsilon_k)$ elements.
We define $C_{\infty} \coloneqq \cup_{k=0}^\infty C_k$.
The set $C_0$ contains only one point.
We choose this point to be $x_0$.

Fix $k \in \mathbb{N} \cup \{0\}$.
Following \cite[p.\ 94]{Buldygin2000}, we define a mapping
$\pi_k \colon C \to C_k$
such that $\pi_k(x)  = x$ if $x\in C_k$
and $\pi_k(x)$ equals a point in $C_k$
closest to $x$ if $x\not\in C_k$.

Let $x \in  C_{\infty}$. Then there exists
$ K(x) \in \mathbb{N} \cup \{0\}$ such that $x \in C_{K(x)}$.
We define $x_{K(x)} \coloneqq x$, and
$x_{k-1} \coloneqq \pi_{k-1}(x_k)$,
for
$k=1, \ldots,K(x)$. We have
\begin{align*}
G(x,\xi) = G(x_0,\xi)
+ \sum_{k=1}^{K(x)}(G(x_k,\xi)- G(x_{k-1},\xi)).
\end{align*}
Hence
\begin{align*}
\|G(x,\xi)\|_{Y_2}
\leq \|G(x_0,\xi)\|_{Y_2}
+
\sum_{k=1}^{K(x)}
\max_{x \in C_k}\|G(x,\xi)- G(\pi_{k-1}(x),\xi)\|_{Y_2}.
\end{align*}

Combined with \eqref{eq:mappingsubgaussian"},
$d_{Y_1}(x, \pi_{k-1}(x)) \leq \varepsilon_{k-1}$
for all $x \in C_k$,
and  \Cref{lem:meanmax}, we find that
for $k = 1, 2, \ldots$,
\begin{align*}
\mathbb{E}[\max_{x \in C_k}\,
\|G(x,\xi)-G(\pi_{k-1}(x),\xi)\|_{Y_2}]
\leq \varepsilon_{k-1}
\sqrt{2\ln(2\mathcal{N}\big(C, \varepsilon_k\big))}.
\end{align*}
We obtain
\begin{align*}
\mathbb{E}[\sup_{x \in C_{\infty}}\, \|G(x,\xi)\|_{Y_2}]
& \leq
\mathbb{E}[\|G(x_0,\xi)\|_{Y_2}]
+
\sum_{k=1}^{\infty}\,
\varepsilon_{k-1}
\sqrt{2\ln(2\mathcal{N}\big(C, \varepsilon_k\big))}.
\end{align*}
Since the set $C_{\infty}$
is dense in $C$,  $G$ is a Carath\'eodory mapping, and
\begin{align*}
\varepsilon_{k-1}
\sqrt{2\ln(2\mathcal{N}\big(C, \varepsilon_k\big))}
\leq 4 \int_{\varepsilon_{k+1}}^{\varepsilon_k}
\sqrt{2\ln(2\mathcal{N}\big(C, \varepsilon\big))}
\, \mathrm{d} \varepsilon,
\end{align*}
we obtain the assertion.
\end{proof}

The next lemma provides
an exponential moment inequality.
It is a direct consequence of Theorem~3 in \cite{Pinelis1986}
and Lemma~1 in \cite{Milz2021}, and has been established
in \cite{Milz2022b}.

\begin{lemma}[{see \cite[Lem.\ B.4]{Milz2022b}}]
\label{prop:saa:2020-11-21T20:25:01.71}
Let $\tau > 0$,
let $H$ be a real, separable Hilbert space, and let
$Z_i : \Theta\to H$
be independent, mean-zero random vectors such that
$\mathbb{E}[\cosh(\lambda \|Z_i\|_H)]\leq \exp(\lambda^2\tau^2/2)$
for all $\lambda \geq 0$
and $i \in \{1, 2, \ldots, N \}$. Then
for each
$\lambda \geq 0$,
$$
\mathbb{E}[\cosh(\lambda \| Z_1 + \cdots + Z_N\|_H)]
\leq \exp(3\lambda^2 \tau^2 N/4).
$$
\end{lemma}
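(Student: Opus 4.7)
The plan is to combine the two results that the paper explicitly cites: Theorem~3 of \cite{Pinelis1986} and Lemma~1 of \cite{Milz2021}. Pinelis's theorem is an exponential-moment inequality for the norm of a sum of independent, mean-zero, Hilbert space-valued random vectors. It exploits the 2-smoothness of Hilbert space to reduce the problem, via a conditional calculation along the martingale $(S_n)_{n=1}^N$ with $S_n = Z_1+\cdots+Z_n$, to a product of scalar moment-generating-function-type factors of each $\|Z_i\|_H$.

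My first step would be to apply Pinelis's theorem to write
\begin{align*}
\mathbb{E}\bigl[\cosh\bigl(\lambda\|Z_1+\cdots+Z_N\|_H\bigr)\bigr]
\leq \prod_{i=1}^N \Psi_i(\lambda),
\end{align*}
where $\Psi_i(\lambda)$ is the scalar exponential moment of a quadratic functional of $\|Z_i\|_H$ naturally produced by Pinelis's conditional argument. The independence of the $Z_i$'s together with the mean-zero hypothesis are both critical: the former makes the conditional expectations factorize, and the latter ensures the martingale cross terms vanish so that the 2-smoothness bound can be iterated at each step.

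The second step would be to apply Lemma~1 of \cite{Milz2021} to each factor $\Psi_i(\lambda)$. That lemma converts the hypothesis $\mathbb{E}[\cosh(\lambda\|Z_i\|_H)]\leq\exp(\lambda^2\tau^2/2)$ into a bound of the form $\Psi_i(\lambda)\leq\exp(3\lambda^2\tau^2/4)$. The numerical factor $3/4$, rather than the naive $1/2$, is precisely the loss incurred when passing from a $\cosh$-type sub-Gaussian control on the norm to the squared-norm exponential moment that appears inside the Pinelis product. Multiplying the per-summand bounds over $i=1,\ldots,N$ then yields $\exp(3\lambda^2\tau^2 N/4)$ as claimed.

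The main obstacle is constant bookkeeping: one must verify that the quantity produced by Pinelis's theorem matches precisely the quantity controlled by Milz's conversion lemma, so that no further slack appears and the factor $3/4$ emerges exactly. Both references are stated in forms directly compatible with this composition, so no new analytic inequality is needed beyond carefully assembling them in the order above.
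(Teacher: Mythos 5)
Your proposal matches the paper's approach: the paper gives no self-contained proof of this lemma but states that it is a direct consequence of Theorem~3 in Pinelis (1986) combined with Lemma~1 in Milz (2021), established previously as Lemma~B.4 of the cited reference, which is exactly the composition you describe. Your account of the division of labor (Pinelis's martingale/2-smoothness argument producing a product of per-summand exponential moments of $\|Z_i\|_H^2$, and Milz's lemma converting the $\cosh$ hypothesis into those factors at the cost of the constant $3/4$ in place of $1/2$) is consistent with how the paper assembles the result.
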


Now we are ready to establish \Cref{prop:uniformexponentialtailboundaverage}.

\begin{proof}[{Proof of
 \Cref{prop:uniformexponentialtailboundaverage}}]
Defining the metric $d_{Y_1}(x,y) \coloneqq \sqrt{3/2}(M/\sqrt{N}) \|x-y\|_H$,
\Cref{thm:chaininghilbertspace,%
prop:saa:2020-11-21T20:25:01.71} imply
the assertion.
\end{proof}

The following example demonstrates an upper bound on
the integral in \eqref{eq:dudley-bound} under a bound on the
covering numbers. We use it
in the main text to establish mean convergence rates for
SAA optimal values
and criticality measures.

\begin{example}
\label{example:entropyintegralbound}
Let $c> 0$ be a constant,
and let $C \subset Y$ be a nonempty subset of a metric space
$(Y, d_{Y})$ with diameter $D > 0$.
If $\mathcal{N}(C, \varepsilon) \leq 2^{c/\varepsilon}$
for all $\varepsilon \in (0, D]$, then
\begin{align*}
\int_{0}^{D/2}
\sqrt{\ln(2\mathcal{N}(C, \varepsilon))}
\, \mathrm{d}\varepsilon
\leq
\int_{0}^{D/2}
(1+\sqrt{c/\varepsilon})
\, \mathrm{d}\varepsilon
\href{https://wolfr.am/1vpXtnRsS}{=} \sqrt{2cD}+D/2.
\end{align*}
\end{example}

\begin{footnotesize}
\bibliography{EnsembleControl-arXiv-v2}
\end{footnotesize}

\end{document}